\theoremstyle{plain}
\newtheorem{Lem}{Lemma}[section]
\newtheorem{Cor}[Lem]{Corollary}
\newtheorem{Thm}[Lem]{Theorem}
\theoremstyle{definition} % non-italicized text in theorem environment
\newtheorem{Rk}[Lem]{Remark}
\newtheorem{Def}[Lem]{Definition}} 
\newcommand{\zig}{\addtocounter{Lem}{1}\tag{\theLem}} 
\DeclareMathOperator*{\holim}{holim}
\DeclareMathOperator*{\colim}{colim}
\def\:{\colon}
\DeclareMathAlphabet{\mathpzc}{OT1}{pzc}{m}{it}
\begin{document}

\title{Obtaining intermediate rings 
of a local profinite Galois extension without localization}
%\title{Discrete $G$-spectra and profinite Galois extensions}
\author{Daniel G. Davis$\sp 1$}
\begin{abstract}
Let $E_n$ be the Lubin-Tate spectrum and let $G_n$ be the $n$th extended 
Morava stabilizer group. Then there is a 
discrete $G_n$-spectrum $F_n$, 
with $L_{K(n)}(F_n) \simeq E_n$, that has the property that $(F_n)^{hU} 
\simeq E_n^{hU}$, for every open subgroup $U$ of $G_n$. In particular, 
$(F_n)^{hG_n} \simeq L_{K(n)}(S^0).$ More generally, for any closed 
subgroup $H$ of $G_n$, there is a discrete $H$-spectrum $Z_{n, H}$, such 
that $(Z_{n, H})^{hH} \simeq E_n^{hH}.$ These conclusions are obtained from  
results about consistent $k$-local profinite $G$-Galois extensions $E$ of finite 
vcd, where 
$L_k(-)$ is $L_M(L_T(-))$, with $M$ a finite spectrum and $T$ smashing. 
For example, we show that $L_k(E^{hH}) \simeq E^{hH}$, for every open 
subgroup $H$ of $G$. 
\end{abstract}

\footnotetext[1]{The author was supported by a grant 
from the Louisiana Board of Regents Support Fund.}
 
\maketitle

\section{Introduction}
\par
Let $n \geq 1$ and let $p$ be a prime. Let 
$K(n)$ be the $n$th Morava $K$-theory spectrum and let 
$G_n = S_n \rtimes \mathrm{Gal}(\mathbb{F}_{p^n}/\mathbb{F}_p)$ 
be the nth extended Morava stabilizer group. Also, let 
$E_n$ be the $n$th Lubin-Tate spectrum, with
$\pi_\ast(E_n)=W(\mathbb{F}_{p^n})\llbracket u_1, ..., 
u_{n-1}\rrbracket[u^{\pm 1}],$
where $W(\mathbb{F}_{p^n})$ is the ring of Witt vectors 
with coefficients in 
the field $\mathbb{F}_{p^n}$, the degree of $u$ is $-2$, 
and the complete power series ring 
is in degree zero. 
\par
By \cite{cts}, the profinite group $G_n$ acts continuously on $E_n$, 
so that for every closed subgroup $H$ of $G_n$, 
$E_n$ is a continuous $H$-spectrum, and the homotopy fixed point 
spectrum $E_n^{hH}$ can be formed. Also, by \cite{thesis} (see 
\cite[Theorem 8.2.1]{joint} for a more efficient proof), there is an equivalence 
$E_n^{hH} \simeq E_n^{dhH}$, where $E_n^{dhH}$ is the 
$K(n)$-local commutative $S^0$-algebra of \cite{DH} and, for open normal 
subgroups $H$, a key ingredient in 
building $E_n$ as a continuous $G_n$-spectrum.
\par
In more detail, as in 
\cite[\S 5.2]{joint}, let $\mathpzc{Alg}$ be the model category 
of discrete commutative $G_n$-$L_{K(n)}(S^0)$-algebras: 
objects of $\mathpzc{Alg}$ are 
discrete $G_n$-spectra that are also commutative $L_{K(n)}(S^0)$-algebras, 
and morphisms are $G_n$-equivariant maps of commutative 
$L_{K(n)}(S^0)$-algebras. Let $(-)_\mathpzc{F} \: \mathpzc{Alg} \rightarrow 
\mathpzc{Alg}$ be a fibrant replacement functor, 
and let $U <_o G_n$ denote an 
open subgroup of $G_n$. 
Then 
set \[F_n = \colim_{N \vartriangleleft_o G_n} (E_n^{dhN})_\mathpzc{F}.\] 
Since each $E_n^{dhN}$ is a $G_n/N$-spectrum that is $K(n)$-local, 
$F_n$ is 
a discrete $G_n$-spectrum that is $E(n)$-local and, 
by \cite[Lemma 6.7]{cts}, {\em not} $K(n)$-local. 
Here, $E(n)$ is 
the Johnson-Wilson spectrum, with 
$\pi_\ast(E(n)) = \mathbb{Z}_{(p)}[v_1, ..., v_{n-1}][v_n^{\pm1}]$.
\par
Given a tower $M_0 \leftarrow M_1 \leftarrow \cdots 
\leftarrow M_i \leftarrow \cdots$ 
of generalized Moore spectra, such that for any $E(n)$-local spectrum 
$Z$, $L_{K(n)}(Z) \simeq \holim_i (Z \wedge M_i)$ (as in 
\cite[\S 2]{HoveyCech}), 
\[E_n \simeq L_{K(n)}(F_n) \simeq \holim_i (F_n \wedge M_i),\] where 
each $F_n \wedge M_i$ is a discrete $G_n$-spectrum. Hence, as in 
\cite{cts}, for any closed subgroup $H$ of $G_n$,
%\begin{align*}
\[E_n^{hH} = (\holim_i (F_n \wedge M_i))^{hH} 
= \holim_i (F_n \wedge M_i)^{hH} \simeq L_{K(n)}((F_n)^{hH}),\]
% \end{align*} 
where the last step uses that each $M_i$ is a finite spectrum.
In particular, when $H=G_n$, we have 
\[L_{K(n)}(S^0) \simeq E_n^{hG_n} \simeq L_{K(n)}((F_n)^{hG_n}),\] 
where the first equivalence is given by \cite[Theorem 1, (iii)]{DH} and 
\cite[Corollary 8.1.3]{joint}. When $p=2$, $G_2$ has a finite subgroup 
$\mathcal{G}_{48}$ of order 48, and the Hopkins-Miller spectrum $EO_2$ is 
given by
\[EO_2 = E_2^{h\mathcal{G}_{48}} \simeq L_{K(2)}((F_2)^{h\mathcal{G}_{48}})\] 
(for more on $EO_2$, see \cite[Theorem 5.1]{eo2homotopy}). Also, 
for each $n$ and $p$, there is a finite subgroup $\mathbb{F}_{p^n}^\times$ of 
$S_n$ such that, by setting $K_n = \mathbb{F}_{p^n}^\times \rtimes 
\mathrm{Gal}(\mathbb{F}_{p^n}/\mathbb{F}_p),$
\[\smash{\widehat{E(n)}} \mathrel{\mathop:}= 
L_{K(n)}(E(n)) \simeq E_n^{h{K_n}} \simeq 
L_{K(n)}((F_n)^{hK_n})\] 
(see the proof of \cite[Proposition 5.4.9, \negthinspace (a)]{rognes}). 
\par
The equivalence $E_n^{hH} \simeq L_{K(n)}((F_n)^{hH})$ and the 
above three examples have the following pattern in common:  the $K(n)$-local 
spectrum of interest is obtained by first 
taking the homotopy fixed points of the non-$K(n)$-local 
discrete $G_n$-spectrum 
$F_n$, and then $K(n)$-localizing. 
As seen above in the case of $E_n^{hH}$, this can be equivalently 
expressed by saying that the $K(n)$-local spectrum is obtained by taking the 
homotopy fixed points of the homotopy limit of the tower $\{F_n \wedge M_i\}_i$ 
of discrete $G_n$-spectra. The strength of this pattern is such 
that it might seem 
that the use of a 
tower of discrete $G_n$-spectra, instead of just a single 
discrete $G_n$-spectrum, is a necessary component of it. 
Similarly, one might conclude from the above discussion that the second step of 
$K(n)$-localizing the homotopy fixed points is an integral part of the pattern. 
However, in this paper, we show that such conclusions are not correct. 
\par
For any open subgroup $U$ of $G_n$, it turns out that 
\[E_n^{hU} \simeq (F_n)^{hU}.\] Thus, we have
\[(F_n)^{hG_n} \simeq L_{K(n)}(S^0),\] which is an equivalence 
that was first obtained by Mark Behrens. 
Hence, for any finite 
spectrum $X$, since $L_{K(n)}(X) \simeq L_{K(n)}(S^0) \wedge X$, 
\[L_{K(n)}(X) \simeq (F_n \wedge X)^{hG_n},\]  
where $F_n \wedge X$ is a discrete $G_n$-spectrum, with $G_n$ acting 
trivially on $X$. Therefore, the 
aforementioned tower/``$K(n)$-localizing at the end" is not necessary, and 
a single discrete $G_n$-spectrum ($F_n$ or $F_n \wedge X$) 
suffices to yield 
the $K(n)$-local spectra $E_n^{hU}$, $L_{K(n)}(S^0)$, and $L_{K(n)}(X),$ for 
finite $X$.
\par
Also, for any closed 
subgroup $H$ of $G_n$, we show that there is a discrete $H$-spectrum $Z_{n,H}$ 
such that 
\[E_n^{hH} \simeq (Z_{n,H})^{hH},\] so that, for example, 
\[EO_2 \simeq (Z_{2, \mathcal{G}_{48}})^{h\mathcal{G}_{48}} \ \ \mathrm{and} \ \ 
\widehat{E(n)} \simeq (Z_{n, K_n})^{hK_n}.\]
Thus, once again, to obtain the 
$K(n)$-local spectrum of interest, it suffices to just take the homotopy 
fixed points of a single 
discrete $H$-spectrum.     
\par
We obtain the above results by considering the more general context of 
$k$-local profinite Galois extensions. Here, 
following \cite[Assumption 1.0.3]{joint}, $k$ denotes a 
spectrum whose localization functor $L_k(-)$ is a 
composite $L_M(L_T(-))$ of localizations, 
where $M$ is a finite spectrum and 
$T$ is smashing. (Also, as explained in 
Section 2, we make the technical assumption that $k$ is an $S$-cofibrant 
symmetric spectrum.) Examples of such 
spectra $k$ include $S^0$, $H\mathbb{F}_p$, $E(n)$, and $K(n)$ 
(see \cite{Bousfieldlocal, HoveyCech}). 
\par
Now let $A$ be a $k$-local cofibrant commutative symmetric ring spectrum, 
$E$ a commutative $A$-algebra, and $G$ a profinite group.
Then $E$ is a \emph{$k$-local profinite $G$-Galois extension} of 
$A$ if
\begin{enumerate}
\item[(a)] there is a directed system of 
finite $k$-local $G/U_\alpha$-Galois extensions
$E_\alpha$ of $A$, where $\{U_\alpha\}_\alpha$ is a cofinal 
system of open normal subgroups of $G$ (see \cite[Section 4.1]{rognes} for 
the definition of a finite $k$-local Galois extension);
\item[(b)] 
the commutative $A$-algebra $E$ is given by 
\[E = \colim_\alpha (E_\alpha)_{fGA},\] where 
\[(-)_{fGA} \: \mathrm{Alg}_{A,G} \rightarrow \mathrm{Alg}_{A,G}\] is 
a fibrant replacement functor for the model category $\mathrm{Alg}_{A,G}$ of 
discrete commutative $G$-$A$-algebras (see \cite[Section 5.2]{joint});
\item[(c)]
each of the maps $E_\alpha \rightarrow E_\beta$
is $G$-equivariant and is a cofibration of underlying 
commutative $A$-algebras; and
\item[(d)] 
for each $\alpha \le \beta$, 
the natural map $E_\alpha \rightarrow (E_\beta)^{h(U_\alpha/U_\beta)}$ 
is a weak equivalence.
\end{enumerate} Also, the Galois extension $E$ has {\em finite vcd} 
(finite virtual cohomological dimension) if $G$ has finite vcd (that is, 
there exists an open 
subgroup $U$ of $G$ and a positive integer $m$ such that the 
continuous cohomology $H^s_c(U; M) = 0,$ 
for all $s > m$, whenever $M$ is a discrete $U$-module).
\par
Given a $k$-local profinite $G$-Galois extension $E$ of $A$, the $k$-local 
Amitsur derived completion $A^{\wedge}_{k,E}$ is the homotopy limit of 
the cosimplicial spectrum
\[\xymatrix@C.25in{L_k(E) \ar@<.5ex>[r] \ar@<-.5ex>[r] & 
L_k(E \wedge_A E) \ar@<.7ex>[r] \ar[r] \ar@<-.7ex>[r] &
L_k(E \wedge_A E \wedge_A E) \ar@<.8ex>[r] \ar@<.25ex>[r] \ar@<-.25ex>[r] 
\ar@<-.8ex>[r] & \cdots}\] that is built from the unit map $A \rightarrow E$ and the 
multiplication $E \wedge_A E \rightarrow E$ (see \cite[Definition 8.2.1]{rognes} 
and \cite{derivedcompletion}). Then, 
following \cite[Definition 1.0.4, \negthinspace (1)]{joint}, the extension $E$ is 
{\em consistent} if the coaugmentation $A \rightarrow A^{\wedge}_{k,E}$ is a 
weak equivalence.
\par
Suppose that 
$E$ is a consistent $k$-local profinite $G$-Galois extension of $A$ of finite vcd. 
Then in Theorem \ref{galois}, for each open subgroup $U$ of $G$, we show that 
\[L_k(E^{hU}) \simeq E^{hU},\] and, by Corollary \ref{A}, 
\[A \simeq E^{hG}.\] More generally, 
if $H$ is any closed subgroup of $G$, then, by (\ref{coarse}), 
there is a discrete $H$-spectrum $Z_{E,H}$ such that 
\[L_k(E^{hH}) \simeq (Z_{E,H})^{hH}.\] If the Galois 
extension $E$ is profaithful (in the sense of \cite{joint}; see Definition 
\ref{profaithful}), then the dependence of $Z_{E,H}$ on $H$ can be lessened 
somewhat (see Theorem \ref{lasttheorem}).
\par
Since $F_n$ 
is a consistent profaithful $K(n)$-local profinite $G_n$-Galois 
extension of $L_{K(n)}(S^0)$ of finite vcd (by 
\cite[Proposition 8.1.2]{joint} and 
\cite[Theorem 5.4.4, 
Proposition 5.4.9,~ \negthinspace(b)]{rognes}), the 
results stated earlier for $E_n$ follow immediately 
from the results described in the previous paragraph. 
\par
We briefly summarize the remaining portions of this paper. Let 
$G$ be a profinite group and let $H$ be a closed subgroup of $G$. 
In Section \ref{sectiontwo}, we recall various definitions and results about 
(local) homotopy fixed points that are used later. In Section \ref{useful}, we 
study several discrete $H$-spectra that are canonically associated to a 
discrete $G$-spectrum $X$ and are useful for the results 
of Section \ref{sectionfour}, where we show that, when $X$ is 
$T$-local and $G$ has finite vcd, $L_k(X^{hH})$ can be obtained by 
just taking the homotopy fixed points of a single discrete $H$-spectrum.
\par
We conclude the Introduction with some comments about the terminology that we use. 
All spectra are symmetric spectra of simplicial sets 
and we use $\Sigma\mathrm{Sp}$ 
to denote the model 
category of spectra (more precisely, the stable model category of symmetric 
spectra, as defined in \cite[Section~3.4]{HSS}). 
\par
Given a profinite group 
$G$, a discrete $G$-spectrum (following \cite[Section~2.3]{joint}; 
see also \cite[Section~3]{cts}) is a spectrum such that, for each $j \geq 0$, the 
simplicial set $X_j$ is a pointed 
simplicial discrete $(G \times \Sigma_j)$-set, where $\Sigma_j$ denotes the 
$j$th symmetric group, together with compatible
$(G \times \Sigma_{l} \times \Sigma_j)$-equivariant maps
\[ \sigma^l \: S^l \wedge X_j \rightarrow X_{j+l}. \]
Here, $S^l = (S^1)^{\wedge l}$ 
has the trivial $G$-action and the factors in this smash product are 
permuted by the action of $\Sigma_l$.  
\par
We use 
$\Sigma\mathrm{Sp}_G$ 
to denote the category of discrete $G$-spectra, in which a 
morphism $f \: X \rightarrow Y$ of
discrete $G$-spectra is a collection of 
$(G \times \Sigma_j)$-equivariant maps $f_j \: X_j \rightarrow Y_j$ 
of pointed simplicial sets 
which are compatible with the structure maps $\sigma^1$. 
\par
All filtered colimits in this paper are taken in the category $\Sigma\mathrm{Sp}$. 
This statement holds even for part (b) in the definition of $k$-local profinite 
$G$-Galois extension that we gave earlier, since 
filtered colimits in the category 
of commutative $A$-algebras are formed in 
$\Sigma\mathrm{Sp}$ (a reference for this fact is 
\cite[Lemma 5.3.4]{joint}).
\vspace{.1in}
\par
\noindent
\textbf{Acknowledgements.} I thank Mark Behrens for helpful discussions 
about $k$-local homotopy fixed points and $F_n$, and for 
realizing that 
$(F_n)^{hG_n} \simeq L_{K(n)}(S^0)$. Also, I thank Ethan Devinatz, for 
helpful comments about $E_n^{hN}$, 
and the referee, for encouraging 
remarks.
\section{Preliminaries on (local) homotopy fixed points}\label{sectiontwo}
\par
In this section, we recall various definitions and results about 
(local) homotopy fixed points that will be useful in the rest of the paper.  
\par
Let $G$ be a profinite group. By \cite[Theorem 2.3.2]{joint}, there is a model 
category structure on $\Sigma\mathrm{Sp}_G$, where a morphism $f$ is a 
weak equivalence (cofibration) if and only if $f$ is a weak equivalence 
(cofibration) in $\Sigma\mathrm{Sp}$.  Let \[(-)_{fG} \: \Sigma\mathrm{Sp}_G 
\rightarrow \Sigma\mathrm{Sp}_G\] be a fibrant replacement functor, so 
that if $X$ is a discrete $G$-spectrum, there is a natural map 
$X \rightarrow X_{fG}$ that is a trivial cofibration, with $X_{fG}$ fibrant, in 
$\Sigma\mathrm{Sp}_G$. Since the fixed points functor 
\[(-)^G \: \Sigma\mathrm{Sp}_G \rightarrow \Sigma\mathrm{Sp}\] is a 
right Quillen functor (by \cite[Lemma 3.1.1]{joint}), the homotopy fixed 
points $(-)^{hG}$ are defined to be the right derived functor of $(-)^G$, so 
that, given $X \in \Sigma\mathrm{Sp}_G$, 
\[X^{hG} = (X_{fG})^G\] (see \cite[Section 3.1]{joint}).
\par
Now let $F$ be any spectrum. Recall from \cite[Section 5.3]{HSS} that, 
by using cofibrant replacement in the $S$ model structure on the category 
of symmetric spectra, there is a 
weak equivalence $F_c \rightarrow F$ in $\Sigma\mathrm{Sp}$ (with the 
usual model structure, as described near the end of the Introduction) such that 
the spectrum $F_c$ is $S$-cofibrant. For this reason, we always assume that 
the original spectrum $F$ itself is $S$-cofibrant. This assumption is used, 
for example, in the proof of Lemma \ref{fibrant} and the discussion after this 
proof. 
\par 
The category of spectra, when equipped with the $F$-local model category 
structure, gives the model category 
$(\Sigma\mathrm{Sp})_F$ (considered, for example, in \cite[Section 6.1]{joint}). 
In $(\Sigma\mathrm{Sp})_F$, a morphism $f$ of spectra 
is a weak equivalence 
(cofibration) if and only if $f$ is an $F$-local equivalence (cofibration) of 
spectra. Here, $f$ is an $F$-local equivalence exactly when $F \wedge f$ is a 
weak equivalence in $\Sigma\mathrm{Sp}$. 
We define the Bousfield localization functor 
\[L_F(-) \: (\Sigma\mathrm{Sp})_F \rightarrow (\Sigma\mathrm{Sp})_F\] by taking 
it to be a 
fibrant replacement functor for $(\Sigma\mathrm{Sp})_F$, so that, given a spectrum 
$X$, the natural map $X \rightarrow L_F(X)$ is a trivial cofibration, with $L_F(X)$ 
fibrant, in $(\Sigma\mathrm{Sp})_F$. 
\par
In \cite[Section 6.1]{joint}, it was shown that the category $\Sigma\mathrm{Sp}_G$ 
can be equipped with an $F$-local model category structure, to give 
the model category 
$(\Sigma\mathrm{Sp}_G)_F$, in which a morphism $f$ of discrete 
$G$-spectra is a weak equivalence (cofibration) if and only 
if $f$ is a weak equivalence (cofibration) in $(\Sigma\mathrm{Sp})_F$. 
(The idea for this $F$-local model structure has an antecedent in 
\cite[Section 7]{hGal}, where such a model structure is put on the 
category of simplicial discrete $G$-sets.) 
Additionally, 
the fixed points functor \[(-)^G \: (\Sigma\mathrm{Sp}_G)_F 
\rightarrow (\Sigma\mathrm{Sp})_F\]
is a right Quillen functor (see \cite[Section 6.1]{joint} for more detail). 
This implies, for example, that 
if $X$ is a discrete $G$-spectrum that 
is fibrant in $(\Sigma\mathrm{Sp}_G)_F$, then 
$X^G$ is fibrant in $(\Sigma\mathrm{Sp})_F$, and hence, $X^G$ is an 
$F$-local spectrum. 
\par
Let \[(-)_{f_FG} \: (\Sigma\mathrm{Sp}_G)_F \rightarrow 
(\Sigma\mathrm{Sp}_G)_F\] be a fibrant replacement functor, so that, given 
a discrete $G$-spectrum $X$, 
there is a natural map $X \rightarrow X_{f_FG}$ that 
is a trivial cofibration, with $X_{f_FG}$ fibrant, 
in $(\Sigma\mathrm{Sp}_G)_F$. Then, as in 
\cite[Section 6.1]{joint}, the $F$-local homotopy fixed points 
$(-)^{h_FG}$ are defined to be the right derived functor of $(-)^G$ with 
respect to the $F$-local model structure, so that, given a discrete $G$-spectrum 
$X$, the $F$-local homotopy fixed 
point spectrum $X^{h_FG}$ is given by \[X^{h_FG} = (X_{f_FG})^G.\] Notice 
that $X^{h_FG}$ is an $F$-local spectrum.
\par
Though the 
following result was (quickly) noted in the proof of \cite[Proposition 6.1.7, (3)]{joint}, 
because of its usefulness, we present it as a lemma and give a proof. 
\begin{Lem}\label{fibrant}
Let $G$ be a profinite group and let $X$ be a discrete $G$-spectrum. 
If $X$ is fibrant in $(\Sigma\mathrm{Sp}_G)_F$, then $X$ is also fibrant 
in $\Sigma\mathrm{Sp}_G$.
\end{Lem}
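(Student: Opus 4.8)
The plan is to exploit that $(\Sigma\mathrm{Sp}_G)_F$ is a left Bousfield localization of $\Sigma\mathrm{Sp}_G$, so that being fibrant in the localized structure is a \emph{stronger} condition than being fibrant in the original one. Concretely, the argument rests on two points recalled in Section \ref{sectiontwo} (and established in \cite[Section 6.1]{joint}). First, a morphism of discrete $G$-spectra is a cofibration in $(\Sigma\mathrm{Sp}_G)_F$ if and only if it is a cofibration in $\Sigma\mathrm{Sp}_G$, since in both cases this just means it is a cofibration in $\Sigma\mathrm{Sp}$. Second, every weak equivalence in $\Sigma\mathrm{Sp}_G$ is a weak equivalence in $(\Sigma\mathrm{Sp}_G)_F$: a weak equivalence in $\Sigma\mathrm{Sp}_G$ is a weak equivalence of underlying spectra, hence an $F$-local equivalence, because smashing with the $S$-cofibrant spectrum $F$ preserves weak equivalences in $\Sigma\mathrm{Sp}$.

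Granting these, I would first deduce that every trivial cofibration in $\Sigma\mathrm{Sp}_G$ is a trivial cofibration in $(\Sigma\mathrm{Sp}_G)_F$: it is a cofibration in both model structures by the first point, and a weak equivalence in both by the second. Equivalently, the identity functor $\Sigma\mathrm{Sp}_G \rightarrow (\Sigma\mathrm{Sp}_G)_F$ preserves cofibrations and trivial cofibrations, hence is left Quillen, so its adjoint --- the identity functor $(\Sigma\mathrm{Sp}_G)_F \rightarrow \Sigma\mathrm{Sp}_G$ --- is a right Quillen functor.

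The conclusion is then formal: a right Quillen functor carries fibrant objects to fibrant objects, so if $X$ is fibrant in $(\Sigma\mathrm{Sp}_G)_F$ then $X$ is fibrant in $\Sigma\mathrm{Sp}_G$. Unwinding, this is simply the observation that $X \rightarrow \ast$ has the right lifting property with respect to every trivial cofibration of $\Sigma\mathrm{Sp}_G$, because by the previous step such maps form a subclass of the trivial cofibrations of $(\Sigma\mathrm{Sp}_G)_F$, against all of which $X \rightarrow \ast$ lifts by hypothesis. I do not expect a genuine obstacle here: the entire content is the bookkeeping that the two model structures on $\Sigma\mathrm{Sp}_G$ have the same cofibrations and nested classes of weak equivalences, and the only step deserving a word of justification is that the $F$-local equivalences contain all stable equivalences, which is exactly where the standing assumption that $F$ is $S$-cofibrant is used.
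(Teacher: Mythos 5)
Your proposal is correct and is essentially the paper's own argument: the paper likewise observes that every trivial cofibration of $\Sigma\mathrm{Sp}_G$ is a cofibration of spectra and, since $F$ is $S$-cofibrant, an $F$-local equivalence (via \cite[Corollary 5.3.10]{HSS}), hence a trivial cofibration in $(\Sigma\mathrm{Sp}_G)_F$, and then concludes by the right lifting property of $X \rightarrow \ast$. Your repackaging via the identity functor being right Quillen is only a cosmetic difference.
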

\begin{proof} Let $f$ be a trivial cofibration 
in $\Sigma\mathrm{Sp}_G$, so that 
$f$ is both a weak equivalence and a cofibration of spectra. Since 
$F$ is $S$-cofibrant, \cite[Corollary 5.3.10]{HSS} implies that the 
functor $F \wedge (-)$ preserves weak equivalences in $\Sigma\mathrm{Sp}$, so that 
$f$ is an $F$-local equivalence. Thus, $f$ is a trivial cofibration 
in $(\Sigma\mathrm{Sp}_G)_F$. This conclusion implies that in 
$\Sigma\mathrm{Sp}_G$, the map 
$X \rightarrow \ast$ to the terminal object 
has the right lifting property with respect to all trivial 
cofibrations.
\end{proof}
\par
Let $X$ be a discrete $G$-spectrum. 
By Lemma \ref{fibrant}, 
$X_{f_FG}$ is fibrant in $\Sigma\mathrm{Sp}_G$, and hence, 
there exists a morphism \[f_X^G \: X_{fG} \rightarrow X_{f_FG}\] 
in $\Sigma\mathrm{Sp}_G.$ Notice that since $F$ is $S$-cofibrant, the map 
$X \rightarrow X_{fG}$ is an $F$-local equivalence, so that the map 
$f_X^G$ also is an $F$-local equivalence. Taking 
the $G$-fixed points of $f_X^G$ yields the map 
\[X^{hG} = (X_{fG})^G \rightarrow (X_{f_FG})^G = X^{h_FG}.\] Therefore, since 
$X^{h_FG}$ is fibrant 
in $(\Sigma\mathrm{Sp})_F$, the trivial cofibration 
$X^{hG} \rightarrow L_F(X^{hG})$ in $(\Sigma\mathrm{Sp})_F$ induces 
a map 
\begin{equation}\zig\label{map}
L_F(X^{hG}) \rightarrow X^{h_F G}.
\end{equation}  
\par 
When $F = k$ (where $k$ is defined as in the Introduction and is assumed to be 
$S$-cofibrant), the following result gives 
conditions under which the map in (\ref{map}) is a weak equivalence. 
\begin{Thm}[{\cite[Proposition 6.1.7, (3)]{joint}}]\label{key}
Let $G$ have finite vcd and suppose that 
$X$ is a discrete $G$-spectrum. If $X$ is $T$-local, then 
the map $L_k(X^{hG}) \rightarrow X^{h_k G}$ is a weak equivalence 
in $\Sigma\mathrm{Sp}$. 
\end{Thm}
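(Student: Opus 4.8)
The plan is to show that the map \eqref{map} (with $F=k$) is a weak equivalence by checking it is a $k$-local equivalence between $k$-local spectra. Its target $X^{h_kG}=(X_{f_kG})^G$ is $k$-local, since $(-)^G\colon(\Sigma\mathrm{Sp}_G)_k\to(\Sigma\mathrm{Sp})_k$ is right Quillen, and its source $L_k(X^{hG})$ is $k$-local by construction; so a map of $k$-local spectra between them is a weak equivalence as soon as it is a $k$-local equivalence. By the way \eqref{map} is built, composing it with the $k$-local equivalence $X^{hG}\to L_k(X^{hG})$ recovers the map $X^{hG}=(X_{fG})^G\to(X_{f_kG})^G=X^{h_kG}$ obtained by applying $(-)^G$ to $f_X^G$; hence it suffices to prove that this latter map is a $k$-local equivalence. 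Now, since $X$ is $T$-local and $T$-local spectra are closed under homotopy limits and filtered homotopy colimits ($T$ being smashing), the spectrum $X^{hG}$, which is assembled from $X$ by such operations (e.g.\ via the cobar construction on a fibrant model of $X$), is $T$-local; and $X^{h_kG}$ is $T$-local too, since $T$-equivalences are $k$-equivalences (as $L_k=L_ML_T$) and so $k$-local spectra are $T$-local. Therefore $L_k(X^{hG})=L_M(L_T(X^{hG}))\simeq L_M(X^{hG})$ and $L_M(X^{h_kG})\simeq L_k(X^{h_kG})\simeq X^{h_kG}$, and the task is reduced to showing that $X^{hG}\to X^{h_kG}$ is an $L_M$-equivalence.

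Two observations prepare the ground. First, for any discrete $G$-spectrum $Z$ and any \emph{finite} spectrum $F$ with trivial $G$-action, the natural map $Z^{hG}\wedge F\to(Z\wedge F)^{hG}$ is a weak equivalence: it holds for $F=S^0$, and both sides carry cofiber sequences in the variable $F$ to cofiber sequences (the right-hand side because $(-)^{hG}$ is a homotopy limit), so an induction over a finite cell filtration of $F$ finishes it. Second, by the hypotheses on $k$ (cf.\ \cite[\S 2]{HoveyCech}) there is a tower $S^0=M_0\leftarrow M_1\leftarrow\cdots$ of finite spectra with $L_k(Y)\simeq\holim_i L_T(Y\wedge M_i)$ for every spectrum $Y$; when $Y$ is $T$-local this yields $L_M(Y)\simeq\holim_i(Y\wedge M_i)$, using that $T$ is smashing (for $k=K(n)$ the $M_i$ are generalized Moore spectra). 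Combining, $L_M(X^{hG})\simeq\holim_i\bigl(X^{hG}\wedge M_i\bigr)\simeq\holim_i\bigl((X\wedge M_i)^{hG}\bigr)$, a homotopy limit of ordinary homotopy fixed point spectra of the $T$-local discrete $G$-spectra $X\wedge M_i$.

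It then remains to identify $X^{h_kG}$, compatibly with the map from $X^{hG}$, with $\holim_i\bigl((X\wedge M_i)^{hG}\bigr)$: that is, to prove that the $k$-local homotopy fixed points of $X$ are obtained by taking ordinary $G$-homotopy fixed points termwise in the levelwise-discrete tower $\{X\wedge M_i\}_i$ and then passing to the homotopy limit over $i$. This is the heart of the matter and the one place where finite vcd is genuinely used: it is what makes the homotopy-fixed-point and homotopy-limit operations interact correctly, via the horizontal vanishing line of the homotopy fixed point spectral sequence of an open subgroup $U$ of finite cohomological dimension, together with strong convergence over the finite quotient $G/U$; this controls the relevant $\lim^1$-terms and forces the pertinent conditionally convergent descent spectral sequences to converge strongly and to agree. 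Concretely, I would compare the descent spectral sequence for $X^{h_kG}$, with $E_2^{s,t}=H^s_c(G;\pi_t X)$, against the one for $\holim_i\bigl((X\wedge M_i)^{hG}\bigr)$, whose input is assembled from $\lim_i$ and $\lim^1_i$ of the groups $H^s_c(G;\pi_t(X\wedge M_i))$; alternatively, one can build a $k$-locally fibrant model of $X$ in $\Sigma\mathrm{Sp}_G$ directly out of $\Sigma\mathrm{Sp}_G$-fibrant models of the $X\wedge M_i$ and take $G$-fixed points. Granting this identification, $L_M$ applied to $X^{hG}\to X^{h_kG}$ is the equivalence $\holim_i\bigl((X\wedge M_i)^{hG}\bigr)\xrightarrow{\;\sim\;}X^{h_kG}$, so $X^{hG}\to X^{h_kG}$ is an $L_M$-equivalence, hence (both spectra being $T$-local) a $k$-local equivalence, and therefore \eqref{map} is a weak equivalence. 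The main obstacle is precisely this reconciliation of the two descriptions of the localized homotopy fixed points and the attendant bookkeeping of $\lim^1$; everything else is formal manipulation of Bousfield localizations, and ``finite vcd'' is exactly the hypothesis that makes the reconciliation valid.
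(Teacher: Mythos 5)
First, a point of reference: the paper does not prove Theorem \ref{key} at all --- it is quoted from \cite[Proposition 6.1.7, (3)]{joint} --- so your attempt can only be measured against the argument given there and against its own internal completeness. Your opening reductions are fine and match the standard approach: the source and target of the map in (\ref{map}) are $k$-local, so it suffices to show that $X^{hG}=(X_{fG})^G\to (X_{f_kG})^G=X^{h_kG}$ is a $k$-local equivalence; $X^{hG}$ is $T$-local (via the cobar description, which itself already uses finite vcd, plus closure of $T$-local spectra under filtered colimits because $T$ is smashing); and your ``first observation'' that $Z^{hG}\wedge F\to (Z\wedge F)^{hG}$ is an equivalence for finite $F$ is correct by the cell induction you describe. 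The problem is that after these reductions you arrive at the statement that $X^{h_kG}$ can be identified, compatibly, with $\holim_i\bigl((X\wedge M_i)^{hG}\bigr)$, you explicitly write ``Granting this identification,'' and you offer only two unexecuted strategies for it. That identification is not bookkeeping: it is essentially equivalent to the theorem itself (it says precisely that the $k$-local homotopy fixed points are computed by ordinary homotopy fixed points up to localization), so the proof is missing exactly where all the content lies.

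There are two further concrete issues. (1) The tower you invoke, with $L_k(Y)\simeq\holim_i L_T(Y\wedge M_i)$ for \emph{every} $Y$, is a special feature of the $K(n)/E(n)$ situation (generalized Moore spectra, as in \cite{HoveyCech}); it does not follow from the standing hypothesis that $L_k=L_ML_T$ with $M$ a single finite spectrum and $T$ smashing, so your second ``observation'' is unjustified in the stated generality. It is also unnecessary: all one needs is that a map between $T$-local spectra is a $k$-local equivalence if and only if it becomes a weak equivalence after smashing with the single finite spectrum $M$, which is formal from $L_k=L_ML_T$ and $T$ smashing. (2) The input your route never supplies, and which the argument in \cite{joint} does supply, is that the $k$-local fibrant replacement $X_{f_kG}$ of a $T$-local discrete $G$-spectrum is again $T$-local. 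With that in hand the proof closes quickly: $f_X^G\colon X_{fG}\to X_{f_kG}$ is a $k$-local equivalence between $T$-local discrete $G$-spectra, hence $M\wedge f_X^G$ is a weak equivalence; since smashing with $M$ commutes with $(-)^{hG}$, the map $(f_X^G)^{hG}$ is a $k$-local equivalence; and $(X_{f_kG})^{hG}\simeq(X_{f_kG})^G=X^{h_kG}$ by Lemma \ref{fibrant}. Your tower reformulation does not avoid this $T$-locality input --- it only relocates it into the unproved identification --- so as written the argument has a genuine gap at its central step.
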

\par
Given an open subgroup $U$ of $G$, let 
\[\mathrm{Res}^U_G \: \Sigma\mathrm{Sp}_G \rightarrow \Sigma\mathrm{Sp}_U, 
\ \ \ \mathrm{Res}^U_G(X) = X,\] be the functor 
that takes a discrete $G$-spectrum $X$ and 
regards it as a discrete $U$-spectrum. The proof of \cite[Proposition 3.3.1, (2)]{joint} 
shows that this functor preserves fibrant objects. Similarly, there is a 
functor \[(\mathrm{Res}^U_G)_F \: (\Sigma\mathrm{Sp}_G)_F 
\rightarrow (\Sigma\mathrm{Sp}_U)_F\] that regards a discrete $G$-spectrum 
as a discrete $U$-spectrum, and the proof of \cite[Proposition 6.1.7, (1)]{joint} 
shows that this functor preserves fibrant objects. We summarize these remarks 
in the following result. 
\begin{Lem}[{\cite{joint}}]\label{open}
Let $G$ be a profinite group, $U$ an open subgroup of $G$, and $X$ a discrete 
$G$-spectrum. If $X$ is fibrant in $\Sigma\mathrm{Sp}_G$, then it is fibrant 
in $\Sigma\mathrm{Sp}_U$. Similarly, if $X$ is fibrant in $(\Sigma\mathrm{Sp}_G)_F$, 
then it is fibrant in $(\Sigma\mathrm{Sp}_U)_F.$
\end{Lem}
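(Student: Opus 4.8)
The plan is to reduce both assertions to the single statement that $\mathrm{Res}^U_G$, equipped with the appropriate model structures, is a \emph{right} Quillen functor, since a right Quillen functor automatically preserves fibrant objects; this is also how the results of \cite{joint} recalled just above the statement are proved. First I would exhibit a left adjoint. Because $U$ is open in $G$, the coset space $G/U$ is finite, and upon choosing coset representatives the induction functor $Y \mapsto G_+ \wedge_U Y$ sends a discrete $U$-spectrum $Y$ to a spectrum whose degree-$j$ simplicial set is the finite wedge $\bigvee_{G/U} Y_j$, equipped with the $(G \times \Sigma_j)$-action in which $G$ permutes the wedge summands through its action on $G/U$ while acting on the summands through the twisting by $U$, and $\Sigma_j$ acts diagonally; with the induced structure maps this is again a discrete $G$-spectrum, and the ordinary induction–restriction adjunction yields $\mathrm{Hom}_{\Sigma\mathrm{Sp}_G}(G_+ \wedge_U Y, X) \cong \mathrm{Hom}_{\Sigma\mathrm{Sp}_U}(Y, \mathrm{Res}^U_G X)$, so that $\mathrm{Res}^U_G$ is the right adjoint in this adjunction.

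Next I would verify that $G_+ \wedge_U (-) \: \Sigma\mathrm{Sp}_U \rightarrow \Sigma\mathrm{Sp}_G$ is left Quillen. Cofibrations and weak equivalences in both $\Sigma\mathrm{Sp}_U$ and $\Sigma\mathrm{Sp}_G$ are created on underlying symmetric spectra, and the underlying symmetric spectrum of $G_+ \wedge_U Y$ is, non-canonically, the finite wedge $\bigvee_{G/U} Y$. Since cofibrations of symmetric spectra are closed under finite coproducts and a finite wedge of stable equivalences is a stable equivalence, $G_+ \wedge_U (-)$ preserves cofibrations and weak equivalences, hence also trivial cofibrations; therefore it is left Quillen and $\mathrm{Res}^U_G$ is right Quillen, which gives the first claim. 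For the second claim the same adjunction does the job for the $F$-local structures: the model category $(\Sigma\mathrm{Sp}_G)_F$ is a Bousfield localization of $\Sigma\mathrm{Sp}_G$, so it has the same cofibrations, while a morphism of discrete $G$-spectra is an $F$-local equivalence exactly when it is one on underlying spectra, i.e.\ when smashing with $F$ yields a stable equivalence. As smashing with $F$ commutes with finite wedges, $G_+ \wedge_U (-)$ still preserves cofibrations and $F$-local equivalences, hence the trivial cofibrations of $(\Sigma\mathrm{Sp}_G)_F$, so $(\mathrm{Res}^U_G)_F$ is a right Quillen functor and preserves fibrant objects.

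The only step that is not purely formal is the verification that $G_+ \wedge_U Y$ is genuinely a \emph{discrete} $G$-spectrum with underlying spectrum the asserted finite wedge; this is exactly where the openness of $U$ (finiteness of $G/U$) is essential, and it is the point I would expect to require the most care. Everything after it is a direct consequence of the model-categorical framework recalled in Section \ref{sectiontwo}, and indeed the argument above is essentially an exposition of what is shown in the proofs of \cite[Proposition 3.3.1, (2)]{joint} and \cite[Proposition 6.1.7, (1)]{joint}.
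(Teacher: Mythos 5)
Your proposal is correct and takes the same route as the paper, which proves this lemma simply by pointing to the proofs of \cite[Propositions 3.3.1, (2) and 6.1.7, (1)]{joint}: those proofs are precisely the induction--restriction Quillen adjunction you describe, in which openness of $U$ makes $G_+ \wedge_U (-)$ a finite wedge on underlying spectra, hence a left Quillen functor for both the ordinary and the $F$-local model structures. Nothing essential is missing from your argument.
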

\par
We conclude this section with several useful facts about spectra. 
Suppose that $\{g_\alpha \: Y_\alpha \rightarrow Z_\alpha\}_\alpha$ is a 
filtered system of weak equivalences of spectra, such that each 
$Y_\alpha$ and $Z_\alpha$ is fibrant. By \cite[Corollaries 3.4.13, 3.4.16]{HSS}, 
the spectra $Y_\alpha$ and $Z_\alpha$ 
consist of Kan complexes, and hence, 
the map $\pi_\ast(\colim_\alpha g_\alpha)$ is an 
isomorphism, so that $\colim_\alpha g_\alpha$ is a weak equivalence (by 
\cite[Theorem 3.1.11]{HSS}). 
% Let \[(-)_f \: \Sigma\mathrm{Sp} \rightarrow \Sigma\mathrm{Sp}\] denote 
% a fibrant replacement functor, so that if $Z$ is a spectrum, there is a natural 
% map $Z \rightarrow Z_f$ that is a trivial cofibration, with $Z_f$ fibrant, 
% in $\Sigma\mathrm{Sp}$.
\par
% Recall that a spectrum is $F$-local if and only if it is fibrant in $(\Sigma\mathrm{Sp})_F$, 
% and notice that if $f$ is a trivial cofibration 
% in $\Sigma\mathrm{Sp}$, then $f$ is also a trivial cofibration 
% in $(\Sigma\mathrm{Sp})_F$. These facts implies that if $X$ is a spectrum 
% that is $F$-local, then $X$ is fibrant in $\Sigma\mathrm{Sp}$. 
If $f$ is a trivial cofibration in $\Sigma\mathrm{Sp}$, 
then $f$ is also a trivial cofibration in $(\Sigma\mathrm{Sp})_F$. 
This fact implies that, 
given a spectrum $X$, the localization $L_F(X)$ is fibrant in $\Sigma\mathrm{Sp}$.
\section{Useful discrete $H$-spectra associated to a 
discrete $G$-spectrum}\label{useful}
\par
In this section, we restrict ourselves to $F = k$, since this is all that is needed 
in later sections. As before, $G$ is a profinite group, and we let $X$ be a 
discrete $G$-spectrum. Given a closed subgroup 
$H$ of $G$, we define and study 
two discrete $H$-spectra that are useful for our later results. 
\par
Let $N$ and $N'$ be proper open 
normal subgroups of $G$, with $N$ a subgroup of $N'$. Since the map 
$X \rightarrow X_{fN'}$ is a trivial cofibration in 
$\Sigma\mathrm{Sp}_N$, the right lifting property in $\Sigma\mathrm{Sp}_N$ 
of the map 
$X_{fN} \rightarrow \ast$  yields 
an $N$-equivariant map 
\[\lambda_{N,N'} \: X_{fN'} \rightarrow X_{fN}.\] The map $\lambda_{N,N'}$ 
induces the 
map \[\lambda^N_{N'} \: X^{hN'} \rightarrow X^{hN},\] 
which is defined to be the composition 
\[X^{hN'} = (X_{fN'})^{N'} \hookrightarrow (X_{fN'})^N \rightarrow (X_{fN})^N = X^{hN}.\] 
\par
Notice that 
the spectra $X^N$ and $(X_{fG})^N$ have natural $G/N$-actions, and hence, 
natural $G$-actions, through the projection $G \rightarrow G/N$. 
However, in general, 
the spectrum $X^{hN} = (X_{fN})^N$ 
is not known to have a natural $G/N$-action, 
apart from the trivial one, since $X_{fN}$ is not known, in general, 
to have a $G$-action. This fact implies that, unlike in the case of the 
diagram $\{(X_{fG})^N\}_{N \vartriangleleft_o G}$, it is not possible, 
in general, to 
assemble together the maps $\lambda^N_{N'}$ to 
form, in a non-degenerate way, a diagram 
$\{X^{hN}\}_{N \vartriangleleft_o G}$ of $G$-spectra 
and $G$-equivariant maps. Thus, in general, it is not possible to form, for example, 
$\colim_{N \vartriangleleft_o G} L_k(X^{hN})$ as a discrete $G$-spectrum 
in the desired way. Therefore, to get around this problem, we make the 
following considerations. 
\begin{Def}\label{simplify}
Let $X$ be a discrete $G$-spectrum. If $H$ is a closed subgroup of $G$ 
and $V$ is 
an open normal subgroup of $H$, then we define 
\[X(H, V) = L_k((X_{fH})^V)\] 
and 
\[X(k, H, V) = L_k((X_{f_kH})^V).\] 
\end{Def}
\par
Since $(X_{fH})^V$ has an $H/V$-action and $L_k(-)$ is a functor, 
$X(H,V)$ is an $H/V$-spectrum. Since the finite group $H/V$ is naturally 
discrete, the 
canonical map $H \rightarrow H/V$ makes 
$X(H,V)$ a discrete $H$-spectrum. Thus, 
\[\colim_{V \vartriangleleft_o H} X(H,V) = 
\colim_{V \vartriangleleft_o H}  L_k((X_{fH})^V)\] 
is also a discrete $H$-spectrum. 
The same argument shows that 
\[\colim_{V \vartriangleleft_o H} X(k, H, V) = 
\colim_{V \vartriangleleft_o H}  L_k((X_{f_kH})^V)\] is 
a discrete $H$-spectrum. Since the output of the functor $L_k(-)$ is always a 
fibrant spectrum, $\colim_{V \vartriangleleft_o H} X(H, V)$ and 
$\colim_{V \vartriangleleft_o H} X(k, H, V)$ are fibrant spectra.
\par
By Lemma \ref{open}, the spectrum $X_{f_kH}$ in Definition \ref{simplify} 
is fibrant in $(\Sigma\mathrm{Sp}_V)_k,$ so that $(X_{f_kH})^V$ is already 
$k$-local. However, in Definition \ref{simplify}, 
we are interested instead in the fibrant replacement 
$L_k((X_{f_kH})^V),$ because there is an $H/V$-equivariant 
map \[L_k((f_X^H)^V) \: X(H,V) 
\rightarrow X(k,H,V)\] that will often be used later.  
\par
Let $N$ be an open normal subgroup of $G$. 
The map $\lambda_{N,G} \: X_{fG} \rightarrow X_{fN}$, a weak equivalence 
between fibrant objects in $\Sigma\mathrm{Sp}_N$, induces the weak equivalence 
$(X_{fG})^N \rightarrow (X_{fN})^N = X^{hN}$, and hence, there is a 
weak equivalence 
\[X(G,N) = L_k((X_{fG})^N) \rightarrow L_k(X^{hN}).\] Thus, we can think of 
the discrete $G$-spectrum 
$\colim_{N \vartriangleleft_o G} X(G,N)$ as a replacement for the 
typically undefinable 
object $\colim_{N \vartriangleleft_o G} L_k(X^{hN})$ that was discussed 
just before Definition \ref{simplify}. 
\par
Notice that 
the isomorphism $X \cong \smash{\displaystyle\colim_{V \vartriangleleft_o H}} \, X^V$ yields 
a commutative diagram 
\begin{equation}\zig\label{picture}
\xymatrix{X \ar[r] \ar[d] & 
\smash{\displaystyle{\colim_{V \vartriangleleft_o H}}} \, X(k, H, V) \\
\smash{\displaystyle{\colim_{V \vartriangleleft_o H}}} \, X(H,V) 
\ar@/_1.3pc/[ur]_(.75){\displaystyle{\colim_{V \vartriangleleft_o H} L_k((f_X^H)^V)}} &}
\end{equation} 
in $\Sigma\mathrm{Sp}_H$. The result below considers a case where the 
diagonal map in (\ref{picture}) is a weak equivalence.
% or if not, one can just say that the maps between each object 
% is the trivial map that sends everything to a point. either way, the colimit 
% can certainly be formed in some way, but i think, not in a way that yields 
% the desired G-spectrum.
\begin{Lem}\label{lemma}
Let $G$ have finite vcd. If $X$ is a discrete $G$-spectrum that 
is $T$-local, then the map 
\[\colim_{N \vartriangleleft_o G} L_k((f_X^G)^N) \: 
\colim_{N \vartriangleleft_o G} \, X(G, N) \overset{\simeq}{\longrightarrow} 
\colim_{N \vartriangleleft_o G} \, X(k, G, N)\] 
is a weak equivalence in $\Sigma\mathrm{Sp}_G$.
\end{Lem}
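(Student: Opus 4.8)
The plan is to prove the statement one open normal subgroup at a time and then pass to the colimit. Since $X(G,N) = L_k((X_{fG})^N)$ and $X(k,G,N) = L_k((X_{f_kG})^N)$ are outputs of $L_k(-)$, they are fibrant in $\Sigma\mathrm{Sp}$, and a morphism of discrete $G$-spectra is a weak equivalence exactly when it is one in $\Sigma\mathrm{Sp}$. Hence, by the fact recalled at the end of Section \ref{sectiontwo} that a filtered colimit of weak equivalences between fibrant spectra is a weak equivalence, it suffices to show that for every open normal subgroup $N$ of $G$, the map $L_k((f_X^G)^N) \colon X(G,N) \to X(k,G,N)$ is a weak equivalence in $\Sigma\mathrm{Sp}$.

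So fix such an $N$ and regard $X$ as a discrete $N$-spectrum; since $N$ is open in $G$ and $G$ has finite vcd, $N$ has finite vcd. The key point is that $(X_{fG})^N$, $(X_{f_kG})^N$, and $(f_X^G)^N$ compute, respectively, $X^{hN}$, $X^{h_kN}$, and the canonical comparison map between them. Indeed, $X \to X_{fG}$ is a trivial cofibration in $\Sigma\mathrm{Sp}_N$ and, by Lemma \ref{open}, $X_{fG}$ is fibrant in $\Sigma\mathrm{Sp}_N$, so $X_{fG}$ is a fibrant replacement of $X$ in $\Sigma\mathrm{Sp}_N$ and $\lambda_{N,G}$ induces a weak equivalence $(X_{fG})^N \overset{\simeq}{\longrightarrow} (X_{fN})^N = X^{hN}$; likewise $X \to X_{f_kG}$ is a trivial cofibration in $(\Sigma\mathrm{Sp}_N)_k$ with $X_{f_kG}$ fibrant there by Lemma \ref{open}, so that $(X_{f_kG})^N$ is $k$-local and the analogous map $X_{f_kG} \to X_{f_kN}$ induces a weak equivalence $(X_{f_kG})^N \overset{\simeq}{\longrightarrow} X^{h_kN}$. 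The $N$-equivariant square built from $f_X^G$, $f_X^N$, $\lambda_{N,G}$, and $X_{f_kG} \to X_{f_kN}$ commutes up to homotopy in $(\Sigma\mathrm{Sp}_N)_k$, since both composites $X_{fG} \to X_{f_kN}$ become equal after precomposition with the trivial cofibration $X \to X_{fG}$ and $X_{f_kN}$ is fibrant in $(\Sigma\mathrm{Sp}_N)_k$. Taking $N$-fixed points — which carries this homotopy along, as $(-)^N$ is a right Quillen functor on both model structures — and then applying $L_k(-)$, a $2$-out-of-$3$ argument shows that $L_k((f_X^G)^N)$ is a weak equivalence if and only if the map $L_k(X^{hN}) \to X^{h_kN}$ of (\ref{map}) (for the group $N$ and with $F = k$) is a weak equivalence.

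That last map is a weak equivalence by Theorem \ref{key}, applied to the profinite group $N$ of finite vcd and the $T$-local discrete $N$-spectrum $X$. Therefore $L_k((f_X^G)^N)$ is a weak equivalence for every open normal subgroup $N$ of $G$, and taking the filtered colimit over such $N$ gives the desired weak equivalence in $\Sigma\mathrm{Sp}_G$. I expect the main obstacle to be the bookkeeping in the middle step: verifying that restricting the $G$-level fibrant replacements $X_{fG}$ and $X_{f_kG}$ to $N$ yields models for $X^{hN}$ and $X^{h_kN}$, and that under these identifications $(f_X^G)^N$ agrees up to homotopy with the comparison map $(f_X^N)^N$ — this is precisely where Lemmas \ref{fibrant} and \ref{open} are used, and one must be careful that the relevant square commutes only up to homotopy and that this homotopy persists after passing to $N$-fixed points and to $k$-localizations.
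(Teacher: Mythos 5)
Your proposal is correct, and its skeleton is the same as the paper's: reduce to a single open normal subgroup $N$ (using that each $X(G,N)$ and $X(k,G,N)$ is a fibrant spectrum, so the filtered colimit of termwise weak equivalences is a weak equivalence), show $(f_X^G)^N$ is a $k$-local equivalence by two-out-of-three in a square whose other edges are under control, and feed Theorem \ref{key} for the group $N$ into that square. The difference is which square you use. You compare $(f_X^G)^N$ with $(f_X^N)^N$ via the change-of-replacement maps $X_{fG}\to X_{fN}$ and $X_{f_kG}\to X_{f_kN}$ and apply Theorem \ref{key} to $X$ itself as a discrete $N$-spectrum; your square commutes only up to homotopy, so you must verify that two maps out of $X_{fG}$ into a fibrant target that agree on $X$ are right homotopic and that this homotopy survives $(-)^N$ — this does go through (e.g.\ via the simplicial structure and the fact that $(-)^N$ commutes with cotensors, so a homotopy through the path object $(X_{f_kN})^{\Delta^1}$ passes to fixed points), but it is genuine extra work that you correctly flag rather than carry out. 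The paper sidesteps this entirely by taking further fibrant replacements of $X_{fG}$: the triangle with vertices $X_{fG}$, $(X_{fG})_{fN}$, $(X_{fG})_{f_kN}$ (whose diagonal is a lift, using Lemma \ref{fibrant}) commutes on the nose, Theorem \ref{key} is applied to the $T$-local discrete $N$-spectrum $X_{fG}$ to identify the diagonal as a $k$-local equivalence, and the second square, with left edge $(f_X^G)^N$ and right edge $((f_X^G)_{f_kN})^N$, commutes strictly by naturality of $(-)_{f_kN}$. So the paper's choice buys strictly commuting diagrams at the cost of the doubly-replaced objects $(X_{fG})_{fN}$ and $(X_{fG})_{f_kN}$, whereas your route is more direct conceptually but requires the homotopy-coherence step to be made precise.
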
 
\begin{proof}
Let $N$ be an open normal subgroup of $G$. 
By Lemma \ref{fibrant}, $(X_{fG})_{f_kN}$ is fibrant in $\Sigma\mathrm{Sp}_N$, 
giving the commutative diagram 
\[\xymatrix{
X_{fG} \ar[r] \ar[d] & (X_{fG})_{f_kN} \\
(X_{fG})_{fN} \ar[ur] &}\] in $\Sigma\mathrm{Sp}_N$. 
The $N$-fixed points of this diagram give 
the commutative diagram 
\begin{equation}\zig\label{diagramone}
\xymatrix{
(X_{fG})^N \ar[r] \ar[d] & ((X_{fG})_{f_kN})^N \\
((X_{fG})_{fN})^N \ar[ur] &}.
\end{equation}
\par
Since 
$X_{fG} \rightarrow (X_{fG})_{fN}$ is a weak equivalence between fibrant 
objects in $\Sigma\mathrm{Sp}_N$, the vertical map 
$(X_{fG})^N \rightarrow ((X_{fG})_{fN})^N$ in (\ref{diagramone}) 
is a weak equivalence, and hence, a $k$-local 
equivalence. Since $N$ has finite vcd (because $G$ has finite vcd) and 
$X_{fG}$ is $T$-local (because $X$ is $T$-local), 
Theorem \ref{key} implies that the diagonal map in (\ref{diagramone}) 
is a $k$-local equivalence. 
Thus, the 
horizontal map in (\ref{diagramone}) 
is also a $k$-local equivalence. This last $k$-local equivalence 
is the top edge in the commutative diagram 
\begin{equation}\zig\label{diagramtwo}
\xymatrix{
(X_{fG})^N \ar[r] \ar[d]_-{(f_X^G)^N} & ((X_{fG})_{f_kN})^N  \ar[d] \\
(X_{f_kG})^N  \ar[r]  & ((X_{f_kG})_{f_kN})^N.}
\end{equation}
\par
Since $X_{f_kG}$ is fibrant in 
$(\Sigma\mathrm{Sp}_N)_k$, the map 
$X_{f_kG} \rightarrow (X_{f_kG})_{f_kN}$ is a weak equivalence between fibrant 
objects in $(\Sigma\mathrm{Sp}_N)_k$, so that the bottom edge in (\ref{diagramtwo}) 
is a $k$-local equivalence. Also, since $f_X^G$ 
is a 
$k$-local equivalence, the map 
\[ (f_X^G)_{f_kN} \: (X_{fG})_{f_kN} \rightarrow (X_{f_kG})_{f_kN}\]
is a weak equivalence between fibrant objects in $(\Sigma\mathrm{Sp}_N)_k$, 
and hence, the right edge in (\ref{diagramtwo}) is a 
$k$-local equivalence. 
Therefore, the left edge in (\ref{diagramtwo}), $(f_X^G)^N$, 
is a $k$-local equivalence.
\par 
Applying $L_k(-)$ to the $k$-local equivalence $(f_X^G)^N$ 
yields the $k$-local equivalence 
\[
L_k((f_X^G)^N) \: X(G,N) = L_k((X_{fG})^N) \rightarrow L_k((X_{f_kG})^N) = X(k,G,N).
\] 
Since $L_k((f_X^G)^N)$ is a $k$-local equivalence between $k$-local spectra, it is 
a weak equivalence, and since it is a weak equivalence between fibrant 
spectra, we can conclude that the desired map 
$\smash{\displaystyle\colim_{N \vartriangleleft_o G}} 
\, L_k((f_X^G)^N)$ is a weak equivalence of spectra.  
\end{proof}
\par
The following result, which will be used in the proof of Corollary 
\ref{main}, is useful for understanding the relationship between 
Theorem \ref{closed} and its simplification (in certain cases) in Corollary 
\ref{main}.
\begin{Lem}\label{lemmasecond}
Let $G$ be a profinite group and $H$ an open subgroup of $G$. 
If $X$ is a discrete $G$-spectrum, then there is a weak equivalence 
\[\colim_{V \vartriangleleft_o H}  X(k, H, V) \overset{\simeq}{\longrightarrow} 
\colim_{N \vartriangleleft_o G}  X(k, G, N)\] 
in $\Sigma\mathrm{Sp}_H$.
\end{Lem}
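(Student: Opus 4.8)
The plan is to exploit the fact that, since $H$ is open in $G$, the open normal subgroups of $G$ that lie inside $H$ are cofinal both among the open normal subgroups of $H$ and among those of $G$, and that $X_{f_kG}$, restricted along $H \hookrightarrow G$, is a perfectly good model for $X_{f_kH}$. Concretely, I would first fix notation: let $\mathcal{C}$ be the directed poset of open normal subgroups $N$ of $G$ with $N \subseteq H$; every such $N$ is open and normal in $H$. Because $H$ is open in $G$, any open normal subgroup $V$ of $H$ has finite index in $G$, hence only finitely many $G$-conjugates, and the intersection of these conjugates is an open normal subgroup of $G$ contained in $V$; thus $\mathcal{C}$ is cofinal in the poset of open normal subgroups of $H$. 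Likewise, intersecting a given $N' \vartriangleleft_o G$ with the (open) normal core of $H$ produces an element of $\mathcal{C}$ inside $N'$, so $\mathcal{C}$ is cofinal in the poset of open normal subgroups of $G$. Since every colimit here is a filtered colimit formed in $\Sigma\mathrm{Sp}$, restricting the indexing poset to $\mathcal{C}$ changes neither $\colim_{V \vartriangleleft_o H} X(k,H,V)$ nor $\colim_{N \vartriangleleft_o G} X(k,G,N)$, and these identifications are $H$-equivariant.

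Next I would build a comparison map at the level of $k$-local fibrant replacements. By Lemma \ref{open}, $X_{f_kG}$, regarded as a discrete $H$-spectrum, is fibrant in $(\Sigma\mathrm{Sp}_H)_k$. Since $X \rightarrow X_{f_kH}$ is a trivial cofibration in $(\Sigma\mathrm{Sp}_H)_k$ and the natural map $X \rightarrow X_{f_kG}$ is a morphism in $\Sigma\mathrm{Sp}_H$, the lifting axiom yields an $H$-equivariant map $\theta \: X_{f_kH} \rightarrow X_{f_kG}$ compatible with the maps out of $X$; the two-out-of-three property for $k$-local equivalences (both structure maps from $X$ being $k$-local equivalences) shows $\theta$ is a $k$-local equivalence. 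Now fix $N \in \mathcal{C}$. Applying Lemma \ref{open} once more, this time along $N \hookrightarrow H$, the spectra $X_{f_kH}$ and $X_{f_kG}$ are fibrant in $(\Sigma\mathrm{Sp}_N)_k$, and $\theta$ is a $k$-local equivalence between them there; since $(-)^N \: (\Sigma\mathrm{Sp}_N)_k \rightarrow (\Sigma\mathrm{Sp})_k$ is a right Quillen functor (recalled in Section \ref{sectiontwo}), it preserves weak equivalences between fibrant objects, so $\theta^N$ is a $k$-local equivalence. Hence $L_k(\theta^N) \: X(k,H,N) \rightarrow X(k,G,N)$ is a $k$-local equivalence of $k$-local — in particular fibrant — spectra, i.e.\ a weak equivalence of spectra. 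This map is $(H/N)$-equivariant because $\theta$ is $H$-equivariant, and it is natural in $N \in \mathcal{C}$, being compatible with the transition maps induced by $(X_{f_kH})^{N'} \hookrightarrow (X_{f_kH})^N$ and $(X_{f_kG})^{N'} \hookrightarrow (X_{f_kG})^N$ for $N \subseteq N'$ in $\mathcal{C}$, by functoriality of fixed points and of $L_k(-)$.

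Finally I would pass to the colimit over $\mathcal{C}$: the maps $L_k(\theta^N)$ assemble into an $H$-equivariant map $\colim_{N \in \mathcal{C}} X(k,H,N) \rightarrow \colim_{N \in \mathcal{C}} X(k,G,N)$ that is a filtered colimit of weak equivalences between fibrant spectra, hence a weak equivalence of spectra by the fact recalled at the end of Section \ref{sectiontwo}; composing with the $H$-equivariant identifications of the first step yields the asserted weak equivalence in $\Sigma\mathrm{Sp}_H$. The genuinely non-formal inputs are the double cofinality of $\mathcal{C}$ (which uses openness of $H$) and the repeated appeal to Lemma \ref{open} to keep the relevant objects $k$-locally fibrant after restricting to $H$ and then to $N$. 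The step I expect to need the most care is checking that the maps $L_k(\theta^N)$ are truly natural in $N$, so that the colimit comparison is an honest morphism of discrete $H$-spectra rather than merely a zigzag of weak equivalences; once naturality is in hand, the remaining verifications are routine.
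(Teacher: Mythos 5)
Your proposal is correct and follows essentially the same route as the paper's proof: the double cofinality of the open normal subgroups of $G$ contained in $H$ (the paper cites \cite[Lemma 0.3.2]{Wilson} where you argue via finitely many conjugates, but these are the same fact), the comparison map $X_{f_kH} \rightarrow X_{f_kG}$ obtained from the fibrancy of the restriction of $X_{f_kG}$ in $(\Sigma\mathrm{Sp}_H)_k$ (the paper's $\lambda_{k,G,H}$), and the passage to a filtered colimit of weak equivalences between fibrant spectra. The only difference is organizational — the paper factors the composite as $c_2 \circ (\colim L_k((\lambda_{k,G,H})^N)) \circ c_1$ rather than restricting both colimits to a common cofinal poset at the outset — and your naturality concern is handled exactly as you suspect, by functoriality of $(-)^N$ and $L_k(-)$ applied to the single map $\theta$.
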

\begin{proof} 
Recall that if $K$ is a profinite group and if $U$ is an open 
subgroup of $K$, then $U$ 
contains a subgroup $U_K$ that is open and normal in $K$ (see, 
for example, \cite[Lemma 0.3.2]{Wilson}). This fact implies that if 
$V$ is an open normal subgroup of $H$, then, since $V$ is an open 
subgroup of $G$, $V$ contains a subgroup $V_G$ that is open and normal 
in $G$. Thus, $\{ N \, | \, N \vartriangleleft_o G, \, N < H \, \}$ is a cofinal subcollection 
of $\{ V \, | \, V \vartriangleleft_o H \, \},$ giving the isomorphism 
\[c_1 \: \colim_{V \vartriangleleft_o H} X(k,H,V) = 
\colim_{V \vartriangleleft_o H} L_k((X_{f_kH})^V) \, 
\smash{\overset{\cong}{\longrightarrow}} 
\colim_{N \vartriangleleft_o G, \, N < \, H} L_k((X_{f_kH})^N).\] 
\par
By Lemma \ref{open}, $X_{f_kG}$ is fibrant in $(\Sigma\mathrm{Sp}_H)_k$, 
so that there is a weak equivalence 
\[\lambda_{k, G, H} \: X_{f_kH} \rightarrow X_{f_kG}\] 
in $(\Sigma\mathrm{Sp}_H)_k$. If $N$ 
is an open normal subgroup of $G$ that is contained in $H$, then 
$\lambda_{k, G, H}$ is a weak equivalence between 
fibrant objects in $(\Sigma\mathrm{Sp}_N)_k$, so that 
the map \[(\lambda_{k, G, H})^N \: (X_{f_kH})^N \rightarrow (X_{f_kG})^N\] 
is a weak equivalence in 
$(\Sigma\mathrm{Sp})_k$. Hence, 
the map 
\[L_k((\lambda_{k, G, H})^N) \: 
X(k, H, N) = L_k((X_{f_kH})^N) \rightarrow L_k((X_{f_kG})^N) = X(k, G, N)\] 
is a weak equivalence 
in $(\Sigma\mathrm{Sp})_k$ between $k$-localized objects, and thus, it is a 
weak equivalence between fibrant objects in $\Sigma\mathrm{Sp}$. Therefore, 
the filtered colimit 
\[\colim_{N \vartriangleleft_o G, \, N < \, H} L_k((\lambda_{k, G, H})^N) \: 
\colim_{N \vartriangleleft_o G, \, N < \, H} X(k,H,N) \overset{\simeq}{\longrightarrow} 
\colim_{N \vartriangleleft_o G, \, N < \, H} X(k,G,N)\] is also 
a weak equivalence of 
spectra.
\par
Now let $N$ be any open normal subgroup of $G$: $N \cap H$ is an 
open subgroup of $G$, and hence, $N \cap H$ contains 
a subgroup $(N \cap H)_G$ that is open and normal in $G$. Thus, the collection 
$\{ N \, | \, N \vartriangleleft_o G, \, N < H \, \}$ is a cofinal subcollection of 
$\{ N \, | \, N \vartriangleleft_o G \, \}$. This implies that there is an isomorphism 
\[c_2 \: \colim_{N \vartriangleleft_o G, \, N < \, H} X(k,G,N) \, 
\smash{\overset{\cong}{\longrightarrow}} \colim_{N \vartriangleleft_o G } X(k,G,N),\] 
so that the composition 
\[c_2 \circ 
\bigl(\colim_{N \vartriangleleft_o G, \, N < \, H} L_k((\lambda_{k, G, H})^N)\bigr)
\circ c_1 \: \colim_{V \vartriangleleft_o H} X(k,H,V) \overset{\simeq}{\longrightarrow} 
\colim_{N \vartriangleleft_o G} X(k,G,N)\] is a weak equivalence. 
\end{proof}
\section{Obtaining $k$-local spectra from discrete $G$-spectra}\label{sectionfour}
\par
As in Section \ref{useful}, we continue to let $H$ be a closed 
subgroup of a profinite group $G$, and, as usual, we let 
$X$ be a discrete $G$-spectrum. 
Then the following result says that under certain conditions, 
the two-step process of taking the $H$-homotopy fixed points of $X$ and 
then $k$-localizing can be realized 
somewhat more directly by 
just taking the $H$-homotopy fixed points of 
the discrete $H$-spectrum 
$\smash{\displaystyle{\colim_{V \vartriangleleft_o H}}} \, X(H,V)$, so 
that no subsequent $k$-localization is needed. 
\begin{Thm}\label{closed}
Let $G$ have finite vcd and suppose that 
$X$ is a discrete $G$-spectrum that is $T$-local. If $H$ is a closed 
subgroup of $G$, then there is a zigzag of weak equivalences
\begin{equation}\zig\label{closedequation}
L_k(X^{hH}) \overset{\simeq}{\longrightarrow}
(\colim_{V \vartriangleleft_o H} X(k,H,V))^{hH}
\overset{\simeq}{\longleftarrow} 
(\colim_{V \vartriangleleft_o H} X(H,V))^{hH}.
\end{equation}
\end{Thm}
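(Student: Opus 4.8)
The plan is to establish the two weak equivalences of (\ref{closedequation}) separately, in each case reducing to machinery already developed — Lemma \ref{lemma} for the left-hand map and Theorem \ref{key} for the right-hand map — but applied to the closed subgroup $H$ rather than to $G$. The observation that makes this legitimate is that $H$ is itself a profinite group; it has finite vcd because $G$ does (if $U$ is open in $G$ of continuous cohomological dimension $\le m$, then $U \cap H$ is open in $H$ and is a closed subgroup of $U$, hence also of cohomological dimension $\le m$); and $\mathrm{Res}^H_G X$ is a discrete $H$-spectrum whose underlying spectrum $X$ is $T$-local. Moreover, the discrete $H$-spectra $X(H,V) = L_k((X_{fH})^V)$ and $X(k,H,V) = L_k((X_{f_kH})^V)$ of Definition \ref{simplify} are precisely the objects that Section \ref{useful} attaches to the profinite group $H$ and the discrete $H$-spectrum $\mathrm{Res}^H_G X$, since in the paper's notation $X_{fH}$ and $X_{f_kH}$ denote the fibrant replacements of $\mathrm{Res}^H_G X$ in $\Sigma\mathrm{Sp}_H$ and $(\Sigma\mathrm{Sp}_H)_k$. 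Thus everything from Section \ref{useful} specializes to $H$.

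First I would handle the left-hand arrow, which is the easier one. Applying Lemma \ref{lemma} with $G$ replaced by $H$ and $X$ by $\mathrm{Res}^H_G X$ gives that $\colim_{V \vartriangleleft_o H} L_k((f_X^H)^V) \colon \colim_{V \vartriangleleft_o H} X(H,V) \to \colim_{V \vartriangleleft_o H} X(k,H,V)$ is a weak equivalence in $\Sigma\mathrm{Sp}_H$. Since $(-)^{hH}$, being a right derived functor, preserves weak equivalences between discrete $H$-spectra, applying it to this map produces the desired weak equivalence $(\colim_{V \vartriangleleft_o H} X(H,V))^{hH} \overset{\simeq}{\to} (\colim_{V \vartriangleleft_o H} X(k,H,V))^{hH}$.

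For the right-hand arrow, I would first identify $(\colim_{V \vartriangleleft_o H} X(k,H,V))^{hH}$ with $X^{h_k H}$. Since $X_{f_kH}$ is a discrete $H$-spectrum it equals $\colim_{V \vartriangleleft_o H} (X_{f_kH})^V$, and by Lemmas \ref{open} and \ref{fibrant} each $(X_{f_kH})^V$ is a $k$-local spectrum that is fibrant in $\Sigma\mathrm{Sp}$; hence each natural map $(X_{f_kH})^V \to L_k((X_{f_kH})^V) = X(k,H,V)$ is a weak equivalence between fibrant spectra. By the colimit fact recalled at the end of Section \ref{sectiontwo}, the resulting map $X_{f_kH} \to \colim_{V \vartriangleleft_o H} X(k,H,V)$ is then a weak equivalence of discrete $H$-spectra, so applying $(-)^{hH}$, together with Lemma \ref{fibrant} (which says $X_{f_kH}$ is already fibrant in $\Sigma\mathrm{Sp}_H$, so that $(X_{f_kH})^{hH} \simeq (X_{f_kH})^H = X^{h_kH}$), gives a weak equivalence $X^{h_kH} \overset{\simeq}{\to} (\colim_{V \vartriangleleft_o H} X(k,H,V))^{hH}$. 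Precomposing with the map $L_k(X^{hH}) \to X^{h_kH}$ of (\ref{map}), which is a weak equivalence by Theorem \ref{key} applied to $H$ (finite vcd) and $X$ ($T$-local), yields a weak equivalence $L_k(X^{hH}) \to (\colim_{V \vartriangleleft_o H} X(k,H,V))^{hH}$; one checks that this composite agrees with the natural map obtained by applying $(-)^{hH}$ to the top edge $X \to \colim_{V \vartriangleleft_o H} X(k,H,V)$ of diagram (\ref{picture}) and factoring it through $L_k(X^{hH})$, using that the target is $k$-local.

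The hard part is not any single step but the bookkeeping: keeping straight which fibrant replacement ($(-)_{fH}$, $(-)_{f_kH}$, or $L_k(-)$) is used where, and remembering that $\colim_{V \vartriangleleft_o H} X(k,H,V)$ — like $F_n$ in the Introduction — is typically not itself $k$-local, so that its $H$-homotopy fixed points must be computed through the weak equivalence with $X_{f_kH}$ rather than directly as $(-)^H$. The one genuinely conceptual point is the observation in the first paragraph that Theorem \ref{key} and Lemma \ref{lemma}, although stated for a profinite group, apply verbatim with that group taken to be the closed subgroup $H$ acting on $X$; beyond those two results no new argument is needed.
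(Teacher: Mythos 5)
Your proposal is correct and follows essentially the same route as the paper's own proof: the first arrow is obtained by composing the weak equivalence $L_k(X^{hH}) \to X^{h_kH}$ of Theorem \ref{key} with the identification $X^{h_kH} \simeq (X_{f_kH})^{hH}$ (via Lemma \ref{fibrant}) and the weak equivalence $X_{f_kH} \cong \colim_{V}(X_{f_kH})^V \to \colim_V X(k,H,V)$ coming from the levelwise weak equivalences between fibrant spectra, while the second arrow is Lemma \ref{lemma} applied to $H$ in place of $G$. Your extra justification that $H$ inherits finite vcd from $G$ is a detail the paper states without proof, but otherwise the two arguments coincide step for step.
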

\begin{proof}
Since $G$ has finite vcd, $H$ does too, and hence, Theorem 
\ref{key} implies that the map 
\[f_1 \: L_k(X^{hH}) \overset{\simeq}{\longrightarrow} X^{h_kH}\] is a 
weak equivalence. Also, since $X_{f_kH}$ is fibrant 
in $\Sigma\mathrm{Sp}_H$ (by Lemma \ref{fibrant}), the map 
$X_{f_kH} \rightarrow (X_{f_kH})_{fH}$ is a 
weak equivalence between fibrant objects in $\Sigma\mathrm{Sp}_H$, 
so that the induced map
\[f_2 \: X^{h_kH} = (X_{f_kH})^H 
\overset{\simeq}{\longrightarrow} ((X_{f_kH})_{fH})^H 
= (X_{f_kH})^{hH}\] is a weak equivalence in $\Sigma\mathrm{Sp}$. 
Therefore, composing the weak equivalences $f_1, f_2$ 
gives the weak equivalence 
\[\widehat{f} = f_2 \circ f_1 
\: L_k(X^{hH}) \overset{\simeq}{\longrightarrow} (X_{f_kH})^{hH}.\]
\par
Let $V$ be an open normal subgroup of $H$ and consider the trivial 
cofibration 
\[g_H^V \: (X_{f_kH})^V \rightarrow L_k((X_{f_kH})^V) = X(k,H,V)\] 
in $(\Sigma\mathrm{Sp})_k$ 
given by the fibrant replacement $L_k(-)$. Notice that 
the map $g_H^V$ is 
also $H/V$-equivariant. By Lemma \ref{open}, 
$X_{f_kH}$ is fibrant in $(\Sigma\mathrm{Sp}_V)_k,$ so that 
$(X_{f_kH})^V$ is $k$-local, and hence, $g_H^V$ is a $k$-local 
equivalence between $k$-local spectra, so that $g_H^V$ is a 
weak equivalence of spectra. Since $X_{f_kH}$ 
is fibrant in $(\Sigma\mathrm{Sp}_V)_k,$ it is also fibrant in 
$\Sigma\mathrm{Sp}_V$, so that $(X_{f_kH})^V$ is a fibrant spectrum. This 
implies that $g_H^V$ is a weak equivalence between fibrant spectra. Therefore, 
there is a weak equivalence 
\[g \: X_{f_kH} \overset{\cong}{\longrightarrow} 
\colim_{V \vartriangleleft_o H} (X_{f_kH})^V 
\overset{\simeq}{\longrightarrow} \colim_{V \vartriangleleft_o H} X(k,H,V)\] 
in $\Sigma\mathrm{Sp}_H$. 
\par
The composition of weak equivalences
\[(g)^{hH} \circ \widehat{f} \: L_k(X^{hH}) \overset{\simeq}{\longrightarrow} 
(\colim_{V \vartriangleleft_o H} X(k,H,V))^{hH}\] gives the first weak equivalence in 
(\ref{closedequation}). Since $H$ has finite vcd, the second weak equivalence in 
(\ref{closedequation}) 
is an immediate consequence of Lemma \ref{lemma}.
\end{proof}
\par
Notice that in Theorem \ref{closed}, the discrete $H$-spectrum 
$\colim_{V \vartriangleleft_o H} X(H,V)$, whose $H$-homotopy fixed 
points give $L_k(X^{hH})$, depends on $H$ for its construction. However, 
the result below shows that if $H$ is open in $G$, then 
$\colim_{V \vartriangleleft_o H} X(H,V)$ can be replaced with the 
discrete $G$-spectrum 
$\colim_{N \vartriangleleft_o G} X(G,N)$, which is independent of $H$.
\begin{Cor}\label{main}
Let $G$ have finite vcd and suppose that $X$ is a discrete 
$G$-spectrum that is $T$-local. If 
$H$ is an open subgroup of $G$, then there is a zigzag of 
weak equivalences
\begin{equation}\zig\label{mainequation}
L_k(X^{hH}) \overset{\simeq}{\longrightarrow}
(\colim_{N \vartriangleleft_o G}  X(k, G, N))^{hH}
\overset{\simeq}{\longleftarrow} 
(\colim_{N \vartriangleleft_o G} X(G,N))^{hH}.
\end{equation}
\end{Cor}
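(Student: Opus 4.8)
The plan is to deduce Corollary \ref{main} from Theorem \ref{closed} by replacing the $H$-indexed colimits that appear there with the $G$-indexed colimits, using Lemmas \ref{lemmasecond} and \ref{lemma}. The one general fact I will invoke repeatedly is that homotopy fixed points $(-)^{hH} = ((-)_{fH})^H$, being a right derived functor, carry every weak equivalence of discrete $H$-spectra to a weak equivalence of spectra; likewise the restriction functor $\mathrm{Res}^H_G$ preserves weak equivalences, since in both $\Sigma\mathrm{Sp}_G$ and $\Sigma\mathrm{Sp}_H$ the weak equivalences are exactly those detected in $\Sigma\mathrm{Sp}$.

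First I would build the left-hand weak equivalence in (\ref{mainequation}). Theorem \ref{closed} provides a weak equivalence $L_k(X^{hH}) \overset{\simeq}{\longrightarrow} (\colim_{V \vartriangleleft_o H} X(k,H,V))^{hH}$, and Lemma \ref{lemmasecond} provides a weak equivalence $\colim_{V \vartriangleleft_o H} X(k,H,V) \overset{\simeq}{\longrightarrow} \colim_{N \vartriangleleft_o G} X(k,G,N)$ in $\Sigma\mathrm{Sp}_H$. Applying $(-)^{hH}$ to the latter and composing with the former yields the desired weak equivalence $L_k(X^{hH}) \overset{\simeq}{\longrightarrow} (\colim_{N \vartriangleleft_o G} X(k,G,N))^{hH}$.

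Next I would build the right-hand weak equivalence. Lemma \ref{lemma} gives a weak equivalence $\colim_{N \vartriangleleft_o G} X(G,N) \overset{\simeq}{\longrightarrow} \colim_{N \vartriangleleft_o G} X(k,G,N)$ in $\Sigma\mathrm{Sp}_G$; restricting along the inclusion $H \leq G$ makes it a weak equivalence in $\Sigma\mathrm{Sp}_H$, and applying $(-)^{hH}$ produces $(\colim_{N \vartriangleleft_o G} X(G,N))^{hH} \overset{\simeq}{\longrightarrow} (\colim_{N \vartriangleleft_o G} X(k,G,N))^{hH}$, which is the backward arrow in (\ref{mainequation}). Assembling these two equivalences gives the claimed zigzag.

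There is essentially no obstacle here beyond the two bookkeeping facts recorded in the first paragraph, both immediate from the material recalled in Section \ref{sectiontwo}: the corollary is a formal consequence of Theorem \ref{closed} once Lemma \ref{lemmasecond} identifies the $H$-dependent colimit $\colim_{V \vartriangleleft_o H} X(k,H,V)$ with the $H$-independent $G$-indexed colimit $\colim_{N \vartriangleleft_o G} X(k,G,N)$, and Lemma \ref{lemma} relates the two $G$-indexed colimits $\colim_{N \vartriangleleft_o G} X(G,N)$ and $\colim_{N \vartriangleleft_o G} X(k,G,N)$.
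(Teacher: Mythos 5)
Your proposal is correct and follows essentially the same route as the paper: the first arrow comes from composing the equivalence of Theorem \ref{closed} with the result of applying $(-)^{hH}$ to the equivalence of Lemma \ref{lemmasecond}, and the second arrow follows by applying $(-)^{hH}$ to the equivalence of Lemma \ref{lemma} restricted to $H$. The bookkeeping facts you invoke (that $(-)^{hH}$, as a right derived functor, and $\mathrm{Res}^H_G$ preserve weak equivalences) are exactly what the paper uses implicitly.
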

\begin{proof}
By Lemma \ref{lemmasecond} and Theorem \ref{closed}, there are weak 
equivalences 
\[L_k(X^{hH}) \overset{\simeq}{\longrightarrow} 
(\colim_{V \vartriangleleft_o H}  X(k, H, V))^{hH}
\overset{\simeq}{\longrightarrow} 
(\colim_{N \vartriangleleft_o G}  X(k, G, N))^{hH},\] giving the first weak 
equivalence in (\ref{mainequation}). The second weak equivalence in 
(\ref{mainequation}) is an immediate consequence of Lemma \ref{lemma}.
\end{proof} 
\section{A few applications to profinite Galois extensions}
\par
As in the Introduction, 
let $E = \colim_\alpha (E_\alpha)_{fGA}$ be a consistent 
$k$-local profinite $G$-Galois 
extension of $A$ of finite vcd.
As noted in \cite[Proposition 6.2.3]{joint}, 
$E$ is a discrete $G$-spectrum, and, by 
\cite[Remark 6.2.2]{joint}, $E$ is not necessarily $k$-local, but it is 
$T$-local. 
\par
In the theory of $k$-local profinite Galois extensions, the $k$-localization 
plays a very important role. In particular, most results about these extensions 
are only known to be true when $L_k(-)$ is applied to the objects involved. This 
is evident from the near ubiquity of ``$(-)_k$" in 
\cite[Sections 6.2, 6.3, 7]{joint}. However, the 
next result, which shows that $E^{hU}$ is $k$-local for every open subgroup $U$ 
of $G$, gives an interesting example of when the $k$-localization is not 
necessary.
\begin{Thm}\label{galois}
Let $E = \colim_\alpha (E_{\alpha})_{fGA}$ 
be a consistent $k$-local profinite $G$-Galois extension of 
$A$ of finite vcd. If $U$ is an open subgroup of $G$, then  
\[L_k(E^{hU}) \simeq E^{hU}.\] 
\end{Thm}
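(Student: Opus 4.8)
The plan is to deduce $L_k(E^{hU}) \simeq E^{hU}$ by combining the general machinery of Corollary \ref{main} with the consistency hypothesis on the Galois extension $E$. Since $E$ is $T$-local and $G$ has finite vcd, Corollary \ref{main} applies to $X = E$ and the open subgroup $U$, yielding a zigzag of weak equivalences
\[
L_k(E^{hU}) \overset{\simeq}{\longrightarrow}
\bigl(\colim_{N \vartriangleleft_o G} E(k, G, N)\bigr)^{hU}
\overset{\simeq}{\longleftarrow}
\bigl(\colim_{N \vartriangleleft_o G} E(G, N)\bigr)^{hU}.
\]
Of course, this by itself only re-expresses $L_k(E^{hU})$; the substance must come from identifying one of these homotopy fixed point spectra with $E^{hU}$ on the nose, i.e.\ without a residual $L_k(-)$. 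My first step, then, is to understand $\colim_{N \vartriangleleft_o G} E(G, N) = \colim_{N \vartriangleleft_o G} L_k((E_{fG})^N)$. Using the weak equivalence $X(G,N) \to L_k(X^{hN})$ recorded in Section \ref{useful} (applied to $X = E$), this colimit is weakly equivalent to $\colim_{N \vartriangleleft_o G} L_k(E^{hN})$, so the task reduces to showing that this colimit of $k$-localized homotopy fixed point spectra is, as a discrete $G$-spectrum, weakly equivalent to $E$ itself (or at least that taking $U$-homotopy fixed points of it returns $E^{hU}$).

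The crucial input here should be the consistency of $E$ together with the Galois descent properties (a)--(d). For a finite $k$-local $G/U_\alpha$-Galois extension $E_\alpha$, the $k$-local homotopy fixed point spectrum $L_k(E^{hN})$ for $N \vartriangleleft_o G$ with $N \subseteq U_\alpha$ should be computed via the finite Galois descent spectral sequence to recover $E_\alpha$ (this is essentially condition (d), upgraded $k$-locally); taking the colimit over all $N$ then reassembles $\colim_\alpha (E_\alpha)_{fGA} = E$. More precisely, I would argue that $\colim_{N \vartriangleleft_o G} L_k(E^{hN}) \simeq E$ as discrete $G$-spectra by comparing term by term: for $N$ small enough relative to $\alpha$, $L_k(E^{hN})$ receives a compatible map from $(E_\alpha)_{fGA}$ inducing an equivalence after passing to the colimit, using consistency to control the $k$-local Amitsur tower. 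An alternative, and perhaps cleaner, route: invoke that $E$ is already known (from \cite{joint}, Sections 6.2--6.3) to satisfy $L_k(E^{hN}) \simeq E$ for $N$ open normal — since $E^{hN}$ for open $N$ is a finite homotopy fixed point object and $E$ is built as a colimit of the $(E_\alpha)_{fGA}$, one expects $E^{hN} \simeq (E_{fG})^N$ up to $k$-localization and the consistency forces the localization to be trivial on these finite-level pieces. Once $\colim_{N} E(G,N) \simeq E$ in $\Sigma\mathrm{Sp}_G$, applying $(-)^{hU}$ gives $\bigl(\colim_N E(G,N)\bigr)^{hU} \simeq E^{hU}$, and chaining with the Corollary \ref{main} zigzag produces $L_k(E^{hU}) \simeq E^{hU}$.

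The main obstacle I anticipate is precisely the identification $\colim_{N \vartriangleleft_o G} L_k(E^{hN}) \simeq E$: one must pass between the ``external'' description of $E$ as $\colim_\alpha (E_\alpha)_{fGA}$ and the ``internal'' description via homotopy fixed points of smaller and smaller open normal subgroups, and this is exactly where the hypotheses of being \emph{consistent} and \emph{finite vcd} do real work — consistency guarantees the $k$-local Amitsur completion does not lose information, and finite vcd guarantees the requisite spectral sequences and Theorem \ref{key} apply uniformly to all open $N$. A secondary technical point is checking that the colimit $\colim_N E(G,N)$, which a priori is only a $k$-localization of $\colim_N E^{hN}$ term by term, already agrees with $E$ before any further localization; here I would lean on the fact that $E_\alpha$ is $k$-local (being a finite $k$-local Galois extension) so that each finite-level homotopy fixed point $(E_\beta)^{h(U_\alpha/U_\beta)} \simeq E_\alpha$ is itself $k$-local, whence the $L_k(-)$'s in the colimit are absorbed and the colimit over the cofinal system of $N$'s recovers $E$ directly. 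Putting these pieces together gives the stated equivalence $L_k(E^{hU}) \simeq E^{hU}$.
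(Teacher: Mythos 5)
Your proposal follows essentially the same route as the paper: apply Corollary \ref{main} to $X=E$, reindex the colimit over the cofinal system $\{U_\alpha\}$, and identify $\colim_\alpha L_k((E_{fG})^{U_\alpha})$ with $E=\colim_\alpha (E_\alpha)_{fGA}$ term by term via the finite-level equivalences $E_\alpha \simeq L_k(E^{hU_\alpha})$ from \cite[Lemma 6.3.6]{joint}, exactly as the paper does with its maps $\widehat{\varepsilon}_\alpha$. One small caution: the intermediate object $\colim_N L_k(E^{hN})$ you pass through is the ``typically undefinable'' diagram of Section \ref{useful} (no $G/N$-action on $E^{hN}=(E_{fN})^N$), so the comparison must be run directly against $\colim_N L_k((E_{fG})^N)$, which is what your construction in effect does.
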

\begin{proof}
By Corollary \ref{main}, we have
\[L_k(E^{hU}) \simeq 
\bigl(\colim_{N \vartriangleleft_o G} L_k((E_{fG})^N)\bigr)^{hU} 
\cong 
\bigl(\colim_\alpha L_k((E_{fG})^{U_\alpha})\bigr)^{hU},\] 
where the isomorphism is due to the 
fact that $\{U_\alpha\}_\alpha$ is a cofinal collection of 
open normal subgroups of $G$.
\par
For each $\alpha$, the canonical projection $G \rightarrow G/U_\alpha$ makes 
the commutative $G/U_\alpha$-$A$-algebra $E_\alpha$ a discrete commutative 
$G$-$A$-algebra, with $U_\alpha$ acting 
trivially. Thus, by functoriality, $U_\alpha$ also acts trivially on 
$(E_\alpha)_{fGA}$, so that $(E_\alpha)_{fGA}$ is also 
a $G/U_\alpha$-spectrum. 
Since the canonical map $(E_\alpha)_{fGA} \rightarrow E$ is 
$G$-equivariant, the trivial $U_\alpha$-action on the source 
induces a $G/U_\alpha$-equivariant map 
$(E_\alpha)_{fGA} \rightarrow E^{U_\alpha},$ 
which yields the $G/U_\alpha$-equivariant map
\[\widehat{\varepsilon}_{\scriptscriptstyle{\alpha}} \: 
(E_\alpha)_{fGA} \rightarrow E^{U_\alpha} \rightarrow (E_{fG})^{U_\alpha} 
\rightarrow L_k((E_{fG})^{U_\alpha}).\] 
Now \cite[Lemma 6.3.6]{joint} shows that there are equivalences
\[\xymatrix{E_\alpha \ar[r]^-{\varepsilon_{\scriptscriptstyle{\alpha}}}_-\simeq 
& L_k((E_{fGA})^{U_\alpha}) \simeq L_k(E^{hU_\alpha}),}\] 
and it is easy to see that the map 
$\widehat{\varepsilon}_{\scriptscriptstyle{\alpha}}$ is a 
simple variant of the weak equivalence 
$\varepsilon_{\scriptscriptstyle{\alpha}}$ 
(see the discussion just before \cite[Lemma 6.3.6]{joint}). Therefore, 
the weak equivalence ${(E_{fG})^{U_\alpha} 
\overset{\simeq}{\longrightarrow} E^{hU_\alpha}}$ 
implies that the map $\widehat{\varepsilon}_{\scriptscriptstyle{\alpha}}$ is a 
weak equivalence. 
\par
Since $(E_\alpha)_{fGA}$ is a 
positive fibrant discrete $G$-spectrum (by \cite[Theorem 5.2.3]{joint}), it is a 
positive fibrant spectrum, by \cite[Corollary 5.3.3]{joint}. Also, because 
$L_k((E_{fG})^{U_\alpha})$ is fibrant in $\Sigma\mathrm{Sp}$, it is a positive 
fibrant spectrum. Thus, $\widehat{\varepsilon}_{\scriptscriptstyle{\alpha}}$ 
is a weak equivalence in $\Sigma\mathrm{Sp}$ 
between positive fibrant spectra, and hence,
\[\colim_\alpha \ \negthinspace \widehat{\varepsilon}_{\scriptscriptstyle{\alpha}} \: 
E = \colim_\alpha (E_\alpha)_{fGA} 
\overset{\simeq}{\longrightarrow} 
\colim_\alpha L_k((E_{fG})^{U_\alpha})\] is a weak equivalence 
in $\Sigma\mathrm{Sp}_G$, by \cite[Lemma 5.3.1]{joint}. Then we have 
\[L_k(E^{hU}) \simeq 
\bigl(\colim_\alpha L_k((E_{fG})^{U_\alpha})\bigr)^{hU} 
\overset{\simeq}{\longleftarrow} E^{hU},\] giving the desired result. 
\end{proof}
\par
By \cite[Corollary 6.3.2, $\negthinspace$ Lemmas 5.2.5, 5.2.6]{joint}, 
there is a zigzag of 
weak equivalences 
\[A \overset{\simeq}{\longrightarrow} 
L_k(E^{h_\mathrm{Alg}G}) \overset{\simeq}{\longleftarrow} 
L_k(E^{h^+G}) \overset{\simeq}{\longrightarrow} 
L_k(E^{hG}),\] where $E^{h_\mathrm{Alg}G}$ is 
the ``homotopy fixed point commutative $A$-algebra" and 
$E^{h^+G}$ is the homotopy fixed point spectrum of $E$ that is formed with 
respect to the positive stable model structure on $\Sigma\mathrm{Sp}_G$ 
(see \cite[Section 5.2]{joint} for more detail). 
Thus, we obtain the equivalence
\begin{equation}\zig\label{consistent}
A \simeq L_k(E^{hG}).
\end{equation} 
However, the next result, which follows immediately from 
Theorem \ref{galois} (by setting $U=G$) and (\ref{consistent}), 
shows that, in fact, no $k$-localization is 
necessary to obtain $A$ from the homotopy fixed points of $E$.
\begin{Cor}\label{A}
If $E$ is a consistent $k$-local profinite $G$-Galois extension of 
$A$ of finite vcd, then  
\[A \simeq E^{hG}.
\]
\end{Cor}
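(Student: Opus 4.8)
The plan is to deduce Corollary \ref{A} directly by specializing Theorem \ref{galois} to the case $U = G$ and then combining it with the already-established equivalence (\ref{consistent}). Indeed, since $G$ is an open subgroup of itself, Theorem \ref{galois} applies verbatim with $U = G$ and yields a weak equivalence $L_k(E^{hG}) \simeq E^{hG}$. On the other hand, the consistency hypothesis on $E$, together with the zigzag coming from \cite[Corollary 6.3.2, Lemmas 5.2.5, 5.2.6]{joint}, gives the equivalence $A \simeq L_k(E^{hG})$ recorded in (\ref{consistent}). Splicing these two equivalences together produces the chain
\[
A \simeq L_k(E^{hG}) \simeq E^{hG},
\]
which is exactly the asserted conclusion.

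Concretely, the only thing I would write is that Corollary \ref{A} follows immediately from Theorem \ref{galois}, applied with $U = G$, and (\ref{consistent}): the first gives $L_k(E^{hG}) \simeq E^{hG}$, the second gives $A \simeq L_k(E^{hG})$, and composing yields $A \simeq E^{hG}$. There is essentially no obstacle here, since all the substantive work---the finite-vcd input, the appeal to Theorem \ref{key}, the cofinality argument over $\{U_\alpha\}_\alpha$, and the identification of $\widehat{\varepsilon}_{\scriptscriptstyle\alpha}$ with a variant of the weak equivalence $\varepsilon_{\scriptscriptstyle\alpha}$ from \cite[Lemma 6.3.6]{joint}---has already been carried out in the proof of Theorem \ref{galois}. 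The one point worth a moment's care is simply to confirm that nothing in the proof of Theorem \ref{galois} secretly required $U$ to be a \emph{proper} open subgroup; inspecting that argument (and the statement of Corollary \ref{main} on which it rests) shows that $U = G$ is permitted, so the specialization is legitimate.

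If anything, the ``hard part'' is purely expository: making clear to the reader that the content of Corollary \ref{A} is the removal of the $k$-localization that appears in (\ref{consistent}), and that this removal is precisely what Theorem \ref{galois} supplies. Thus the proof is a two-line composition of previously proved equivalences, and no new construction or estimate is needed.
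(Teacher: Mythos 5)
Your proposal matches the paper's own derivation exactly: Corollary \ref{A} is stated there as following immediately from Theorem \ref{galois} with $U=G$ together with the equivalence $A \simeq L_k(E^{hG})$ of (\ref{consistent}). No issues.
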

\par
Now let $H$ be a closed subgroup of $G$. 
By Theorem \ref{galois}, $L_k(E^{hH}) \simeq E^{hH}$ whenever 
$H$ is open in $G$. However, this equivalence need not hold for 
all $H$ closed in $G$; this is seen, 
for example, when $H = \{e\}$, since $E^{h\{e\}} \simeq E$ is 
not necessarily $k$-local. However, it follows immediately from Theorem 
\ref{closed} that for every $H$ closed in $G$, a slightly coarser version of 
Theorem \ref{galois} is still valid: by Theorem \ref{closed}, 
\begin{equation}\zig\label{coarse} 
L_k(E^{hH}) \simeq \bigl(\colim_{V \vartriangleleft_o H} L_k((E_{fH})^V)\bigr)^{hH},
\end{equation} 
so that $L_k(E^{hH})$ can still always be obtained by taking the 
$H$-homotopy fixed points of a discrete $H$-spectrum, with no subsequent 
$k$-localization needed.
\par
Notice that in the discrete $H$-spectrum 
$\colim_{V \vartriangleleft_o H} L_k((E_{fH})^V)$ of (\ref{coarse}), whose 
$H$-homotopy fixed points yield $L_k(E^{hH})$, both the indexing set of the 
colimit and $E_{fH}$ depend on $H$. Now we show that when 
the Galois extension $E$ is profaithful (defined below), the second of these 
dependencies can be eliminated.
\begin{Def}[{\cite[Definition 6.2.1]{joint}}]\label{profaithful}
Let $E = \colim_\alpha (E_\alpha)_{fGA}$ 
be a $k$-local profinite $G$-Galois extension of $A$. If the $k$-local 
$G/U_\alpha$-Galois extension $E_\alpha$ is 
$k$-locally 
faithful (that is, following \cite[Definition 4.3.1]{rognes}, 
if $M$ is an $A$-module and $L_k(M \wedge_A E_\alpha) \simeq \ast,$ 
then $L_k(M) \simeq \ast$) for each $\alpha$, then 
$E$ is a {\em profaithful $k$-local} extension.
\end{Def}
\begin{Thm}\label{lasttheorem}
Let $E$ be a consistent profaithful $k$-local profinite $G$-Galois extension of 
$A$ of finite vcd. If $H$ is a closed subgroup of $G$, then 
\begin{equation}\zig\label{almostdone}
L_k(E^{hH}) \simeq 
\bigl(\colim_{V \vartriangleleft_o H} L_k((E_{fG})^V)\bigr)^{hH}.
\end{equation}
\end{Thm}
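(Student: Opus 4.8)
The plan is to reduce \eqref{almostdone} to the already-established equivalence \eqref{coarse} by replacing, inside the colimit, the $H$-fibrant replacement $E_{fH}$ with the $G$-fibrant replacement $E_{fG}$. By \eqref{coarse}, we have
\[L_k(E^{hH}) \simeq \bigl(\colim_{V \vartriangleleft_o H} L_k((E_{fH})^V)\bigr)^{hH},\]
so it suffices to produce an $H$-equivariant weak equivalence
\[\colim_{V \vartriangleleft_o H} L_k((E_{fG})^V) \overset{\simeq}{\longrightarrow} \colim_{V \vartriangleleft_o H} L_k((E_{fH})^V)\]
of discrete $H$-spectra and then apply $(-)^{hH}$. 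The natural candidate for this map comes from the $H$-equivariant weak equivalence $\lambda_{H,G} \: E_{fH} \rightarrow E_{fG}$ (analogous to the $\lambda_{N,N'}$ of Section \ref{useful}, using Lemma \ref{open} to see that $E_{fG}$ is fibrant in $\Sigma\mathrm{Sp}_H$), but note this runs in the wrong direction; more likely one uses that $E_{fH}$ is fibrant in $\Sigma\mathrm{Sp}_H$ to lift $E \rightarrow E_{fG}$ through $E \rightarrow E_{fH}$, obtaining $E_{fG} \rightarrow E_{fH}$, or one simply works with the zigzag through $E_{fH} \to E_{fG}$ and inverts it levelwise.

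First I would fix an open normal subgroup $V$ of $H$. Since $V$ is then an open subgroup of $G$ (it need not be normal in $G$), Lemma \ref{open} applied to the closed---indeed open---subgroup $V$ shows that both $E_{fG}$ and $E_{fH}$ are fibrant in $\Sigma\mathrm{Sp}_V$, so each of $(E_{fG})^V$ and $(E_{fH})^V$ is already a fibrant spectrum. The map $\lambda_{H,G} \: E_{fH} \rightarrow E_{fG}$ is a weak equivalence of spectra between objects fibrant in $\Sigma\mathrm{Sp}_V$, so $(\lambda_{H,G})^V \: (E_{fH})^V \rightarrow (E_{fG})^V$ is a weak equivalence of spectra. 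Applying $L_k(-)$ gives a weak equivalence $L_k((E_{fH})^V) \rightarrow L_k((E_{fG})^V)$, which is moreover $H/V$-equivariant, hence a map of discrete $H$-spectra after passing to the colimit over $V$. As in the proofs of Lemmas \ref{lemma} and \ref{lemmasecond}, this map is a weak equivalence between $k$-local (hence fibrant) spectra, so the induced map on filtered colimits
\[\colim_{V \vartriangleleft_o H} L_k((E_{fH})^V) \overset{\simeq}{\longrightarrow} \colim_{V \vartriangleleft_o H} L_k((E_{fG})^V)\]
is a weak equivalence of discrete $H$-spectra.

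Finally I would take $H$-homotopy fixed points of both sides of this weak equivalence; since $(-)^{hH}$ is a right derived functor, it preserves weak equivalences of discrete $H$-spectra, giving
\[\bigl(\colim_{V \vartriangleleft_o H} L_k((E_{fH})^V)\bigr)^{hH} \overset{\simeq}{\longrightarrow} \bigl(\colim_{V \vartriangleleft_o H} L_k((E_{fG})^V)\bigr)^{hH}.\]
Composing with \eqref{coarse} yields \eqref{almostdone}. I expect the only subtle point to be keeping track of the direction of the comparison map $\lambda_{H,G}$ and verifying the compatibility of the lifts $\lambda_{H,G}$ as $V$ varies---that is, checking that the maps $(\lambda_{H,G})^V$ for different $V$ assemble into a morphism of the colimit diagrams. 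This is handled by the usual right-lifting-property argument for fibrant replacements in $\Sigma\mathrm{Sp}_H$: any two choices of $\lambda_{H,G}$ are homotopic, so the colimit map is well-defined up to homotopy, which is all that is needed since we only care about it up to weak equivalence before applying $(-)^{hH}$. The profaithfulness hypothesis does not actually enter this argument directly; it is inherited from the role it plays in the cited results (e.g.\ \cite[Lemma 6.3.6]{joint}) underlying Theorem \ref{galois} and hence \eqref{coarse}, and is recorded in the statement for coherence with the surrounding discussion about eliminating the dependence of $E_{fH}$ on $H$.
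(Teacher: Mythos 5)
Your argument has a genuine gap, and it occurs exactly at the point where the theorem has content. You assert that ``$V$ is then an open subgroup of $G$'' and invoke Lemma \ref{open} to conclude that $E_{fG}$ and $E_{fH}$ are fibrant in $\Sigma\mathrm{Sp}_V$. But $V$ is open and normal in $H$, and $H$ is only assumed \emph{closed} in $G$; hence $V$ is merely a closed subgroup of $G$ (take $H=\{e\}$ inside an infinite $G$ to see that $V$ need not be open in $G$). Lemma \ref{open} does not apply, restriction of fibrant objects to closed non-open subgroups does not preserve fibrancy in general, and so neither $(E_{fG})^V$ nor the comparison map between the two $V$-fixed-point spectra is under control by the elementary argument you give. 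If $H$ were open, your argument would essentially work --- but in that case Corollary \ref{main} and Theorem \ref{galois} already give stronger statements; the entire point of Theorem \ref{lasttheorem} is the non-open closed case.

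Relatedly, your closing claim that ``the profaithfulness hypothesis does not actually enter this argument directly'' is backwards: \eqref{coarse} needs only consistency and finite vcd (it is an instance of Theorem \ref{closed}), and profaithfulness is precisely what makes the passage from $E_{fG}$ to $E_{fH}$ possible. The paper's proof does not compare $(E_{fG})^V$ with $(E_{fH})^V$ by a fibrancy argument; instead it introduces the further fibrant replacement $(E_{fG})_{fH}$ in $\Sigma\mathrm{Sp}_H$ and shows that the map
\[
L_k\bigl((E_{fG})^V\bigr) \longrightarrow L_k\bigl(((E_{fG})_{fH})^V\bigr)
\]
is a weak equivalence by citing \cite[Theorem 7.1.1]{joint} --- a substantive result whose proof uses profaithfulness. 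Only afterward does one use the easy weak equivalence $p^V \: ((E_{fG})_{fH})^V \to (E_{fH})^V$ (legitimate because $p$ is a weak equivalence between objects fibrant in $\Sigma\mathrm{Sp}_V$, $V$ being open in $H$), pass to colimits, and apply \eqref{coarse}. You need to supply the analogue of that first weak equivalence; without it the proof does not go through.
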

\begin{Rk}
Let $E$ and $H$ 
be as in Theorem \ref{lasttheorem}. By (\ref{coarse}), $L_k(E^{hH})$ 
can be obtained by taking the homotopy fixed points of the discrete $H$-spectrum 
\[L = \colim_{N \vartriangleleft_o G} L_k((E_{fH})^{H \cap N}) \cong 
\colim_{V \vartriangleleft_o H} L_k((E_{fH})^V),\] where the isomorphism 
uses that $\{H \cap N \, | \, N \vartriangleleft_o G \, \}$ is a cofinal subcollection of 
$\{V \, | \, V \vartriangleleft_o H \, \}.$ The proof of \cite[Theorem 7.2.1, (1)]{joint} shows 
that $L$ is a consistent profaithful $k$-local profinite 
$H$-Galois extension of $L_k(E^{hH})$ of finite vcd. 
Also, the proof below of Theorem \ref{lasttheorem} 
shows that there is a weak equivalence 
\begin{equation}\zig\label{secondmap}
\colim_{V \vartriangleleft_o H} L_k((E_{fG})^V) \overset{\simeq}{\longrightarrow} L
\end{equation} 
in $\Sigma\mathrm{Sp}_H$, where the source of this weak equivalence is the 
discrete $H$-spectrum that appears on the right-hand side in (\ref{almostdone}). 
The proof of \cite[Theorem 7.2.1, (1)]{joint} shows 
that there is at least a zigzag of 
weak equivalences between the source and target of the map in 
(\ref{secondmap}), so the 
main value of the proof of Theorem \ref{lasttheorem} below is, 
in addition to pointing out the relevant details from \cite{joint}, to show explicitly that 
there is a single weak equivalence. 
\end{Rk}
\begin{proof}[{Proof of Theorem \ref{lasttheorem}}]
Let $V$ be an open normal subgroup of $H$. The map 
\[\xymatrix{{\smash{\displaystyle{\colim_{V < \, U <_o G}}}} (E_{fG})^{U} 
\cong (E_{fG})^V \ar[r]^-{\psi(V)} & ((E_{fG})_{fH})^V}\] induces the map 
\[\widehat{\psi}_V \: 
L_k\bigl(\colim_{V < \, U <_o G} (E_{fG})^{U}\bigr) \rightarrow 
L_k\bigl(((E_{fG})_{fH})^V\bigr).\] By \cite[Theorem 7.1.1]{joint}, 
$\widehat{\psi}_V$ is a weak equivalence, and hence, 
the $H/V$-equivariant map 
\[L_k(\psi(V)) \: L_k((E_{fG})^V) \overset{\simeq}{\longrightarrow} 
L_k\bigl(((E_{fG})_{fH})^V\bigr)\] is a 
weak equivalence.
\par
Since the composition $E \rightarrow E_{fG} \rightarrow 
(E_{fG})_{fH}$ is a trivial cofibration in $\Sigma\mathrm{Sp}_H$, 
there is a weak equivalence 
$(E_{fG})_{fH} \overset{p}{\rightarrow} E_{fH}$ in $\Sigma\mathrm{Sp}_H$. 
Hence, $p$ is a weak equivalence between fibrant objects, 
in $\Sigma\mathrm{Sp}_V$, so that 
the $H/V$-equivariant map \[p^V \: ((E_{fG})_{fH})^V \overset{\simeq}{\longrightarrow} 
(E_{fH})^V\] is a 
weak equivalence. Therefore, the composition 
\[L_k(p^V) \circ L_k(\psi(V)) \: 
L_k((E_{fG})^V) \overset{\simeq}{\longrightarrow} 
L_k\bigl(((E_{fG})_{fH})^V\bigr) \overset{\simeq}{\longrightarrow} 
L_k((E_{fH})^V),\] which 
is $H/V$-equivariant, is a weak equivalence, and since its source and 
target are fibrant spectra, there is a weak equivalence 
\[\colim_{V \vartriangleleft_o H} L_k((E_{fG})^V) \overset{\simeq}{\longrightarrow} 
\colim_{V \vartriangleleft_o H} L_k((E_{fH})^V)\] in $\Sigma\mathrm{Sp}_H$, giving 
\[\bigl(\colim_{V \vartriangleleft_o H} L_k((E_{fG})^V)\bigr)^{hH} 
\overset{\simeq}{\longrightarrow} 
\bigl(\colim_{V \vartriangleleft_o H} L_k((E_{fH})^V)\bigr)^{hH} \simeq L_k(E^{hH}),\] 
where the last equivalence is from (\ref{coarse}).   
\end{proof}

% \bibliographystyle{plain}
% \bibliography{biblio} 

\end{document}